\documentclass[12pt,reqno]{amsart}
\usepackage{amssymb,latexsym}
\usepackage{amsmath,color}
\usepackage{amsthm}
\usepackage{epsfig}
\usepackage{graphicx}
\usepackage{titletoc}
\usepackage{dsfont}
\usepackage{nccbbb}
\usepackage{amsmath}
\usepackage{amssymb}
\usepackage{amssymb}
\usepackage{amstext}
\usepackage{amsopn}
\usepackage{amsbsy}
\usepackage{amscd}
\usepackage{amsxtra}
\usepackage{accents}
\usepackage{bbm}
\usepackage{yfonts}
\usepackage{bbm}
\usepackage[pagewise]{lineno}
\usepackage[dvipsnames]{xcolor} 
\definecolor{darkgreen}{rgb}{0.0, 0.5, 0.0}

\usepackage[colorlinks=true, linkcolor=blue, citecolor=magenta, urlcolor=cyan]{hyperref}

\numberwithin{equation}{section}

\usepackage{xparse}
\usepackage{microtype}

        \makeatletter
        \newcommand{\ovset}[3][0ex]{%
          \mathrel{\mathop{#3}\limits^{
            \vbox to#1{\kern0\ex@
            \hbox{$\scriptstyle#2$}\vss}}}}
        \makeatother

\NewDocumentCommand{\hideparbox}{O{c}mm+m}
 {
  \group_begin:
  \vbox_set:Nn \l_hideparbox_box
   {
    \use:c { @parboxrestore }
    \hsize=#3\scan_stop:
    \strut#4\par
   }
  \vbadness=\c_ten_thousand 
  \vbox_set_split_to_ht:NNn \l_hideparbox_box \l_hideparbox_box { #2 }
  \parbox[#1][#2]{#3}
   {
    \vbox_unpack:N \l_hideparbox_box
   }
  \group_end:
 }
\ExplSyntaxOff

\usepackage{cases}

\usepackage[T1]{fontenc}

\newtheorem{theorem}{Theorem}[section]
\newtheorem{proposition}[theorem]{Proposition}
\newtheorem{lemma}[theorem]{Lemma}
\newtheorem{corollary}[theorem]{Corollary}
\newtheorem{remark}[theorem]{Remark}

\theoremstyle{definition}

\newtheorem{example}[theorem]{Example}

\newcommand{\ds}{\displaystyle}

\newcommand{\dss}{\mathrm{d}s}

\newcommand{\ue}{u_\lambda}

\newcommand{\ve}{v_{\lambda,\mu}}
\newcommand{\dx}{\,\mathrm{d}x}
\newcommand{\dy}{\,\mathrm{d}y}

\usepackage[misc]{ifsym}
\allowdisplaybreaks

\title[Elliptic equations with nonlocal boundary conditions]{}

\address{\rm (Chiun-Chang Lee) Institute for Computational and Modeling Science, National Tsing Hua University, Hsinchu 300, Taiwan}
\email{lee2@mx.nthu.edu.tw}

\begin{document}

\maketitle
\vspace*{-66pt}
\begin{center}
{\large\bf An existence theorem for elliptic equations with nonlocal boundary conditions}\vspace*{8pt}\\
 Chiun-Chang Lee
\end{center}
\begin{abstract} 
{\scriptsize 
 The focus of this study is on exploring some qualitative properties of solutions to a class of semilinear elliptic problems in bounded domains, where the boundary conditions depend non-locally on the unknown solution at specified interior points and its integral. The primary approach integrates a fixed-point argument with refined asymptotic estimates to establish the existence and structure of solutions. Furthermore, the maximum principles are established under practical nonlocal-type boundary conditions.
\vspace{3pt}
\\ 
\textbf{Keywords.} 
Multi-point boundary condition, Integral boundary condition, Non-variational, Fixed-point theorem, Asymptotic analysis \\
\textbf{Mathematics Subject Classification.}  35J15, 35J25, 35J66}
\end{abstract}

\section{\bf  Introduction} 

{\color{black} Significant attention has been focused on a class of semilinear equations defined in bounded domains. In these equations, the boundary conditions incorporate nonlocal terms that depend on the unknown solution at specific interior points (referred to as the multi-point boundary condition) or involve the integral of the unknown solution over a subdomain (referred to as the integral boundary condition).} The corresponding problems often arise in practical applications. For example, the multi-point boundary condition is used in the design of bridges and highways, to model the displacement of the bridge from its unloaded position ({\color{black} see, e.g.,} \cite{ZHZ2007}). Various nonlocal type boundary conditions can also be found in linear optimal control theory and stochastic multi-point boundary value problems. We refer the reader to \cite{AM1950,BF1980,FD2017,G2012,TD2012,W1942} for the related models. For further detailed background of the models with numerical algorithms and applications, see, e.g., \cite{CLW2025,IM1987-1,M1987,N2006,R1974,U1966} and references therein.

Under such nonlocal boundary conditions, the equation no longer possesses a
variational structure. To the best of our knowledge, most of the previous references have concentrated on one-dimensional equations with linear-type multi-point boundary conditions or integral boundary conditions. (We will continue the discussion after formulating the main problem.) However, comprehensive theoretical results for high-dimensional semilinear equations with nonlocal boundary conditions appear to be scarce.

Several open problems persist due to the intricate relationship between the interior and boundary behavior of unknown solutions in nonlocal boundary conditions.
 The present study will explore semilinear problems with various nonlocal boundary conditions. Our approach is novel and allows us to study a wide range of nonlocal boundary conditions. {\color{black} To} be specific, {\color{black} the primary approach combines fixed-point arguments with asymptotic analysis. The method allows us to establish sufficient conditions for the existence and uniqueness of solutions. This framework represents a significant feature and highlight of this work.}

\subsection*{Position of the problem} Let $\lambda$ be a positive parameter, and $\nabla$ represent the standard gradient operator in $\mathbb{R}^N$, $N\geq1$. The equation is formulated  as
\begin{align}\label{equ}
-\nabla\cdot(D(x)\nabla\ue(x))+{\lambda}f(x,\ue(x))=0,&\quad\,\,\,x\in\Omega, 
\end{align}
 subject to the boundary condition 
\begin{equation}\label{bdu}
\ue(x)=g(x)+\mathfrak{B}\big(\ue(\xi_1),...,\ue(\xi_m),\int_{\Omega}w(y)\Phi(\ue(y))\text{d}y\big),\quad\,x\in\partial\Omega,
\end{equation}
where 
$\Omega\subset\mathbb{R}^N$ is a bounded domain  with  a smooth boundary~$\partial\Omega$, and $\xi_1$, ..., $\xi_m$ are {\color{black} $m$ given}  distinct points in $\Omega$. In what follows, we assume that $w\in\text{L}^1(\Omega)$, $D\in\text{C}^{1,\tau}(\overline{\Omega};(0,\infty))$ is positive,  $g\in\text{C}^{0,\tau}(\partial\Omega;\mathbb{R})$, and $\mathfrak{B}:\mathbb{R}^{m+1}\to\mathbb{R}$ and $\Phi:\mathbb{R}\to\mathbb{R}$  are locally Lipschitz continuous in each variable with the Lipschitz constant $\ell>0$ (see \eqref{lip-b} below). Here $\text{C}^{k,\tau}$ denotes the space of functions with H\"{o}lder continuous derivatives up to order $k$ and a H\"{o}lder exponent $\tau\in(0,1)$.  The nonlinear source term~$f:\overline{\Omega}\times\mathbb{R}\to\mathbb{R}$
satisfies certain conditions
which will be specified after we review the related scientific background of~\eqref{equ}--\eqref{bdu}. 

\subsection*{Scientific background} Nonlocal boundary value problems have a rich history in the fields of modeling and mathematical analysis. Approximately fifty years ago, Keller  in his work \cite{K1969}  considered a class of one-dimensional models with solution $u$ satisfying a constraint $\sum_{j=1}^{\widetilde{m}}\beta_ju(\widetilde{\xi}_j)+\int_{\Omega}w(y)u(y)\,\text{d}y=\beta_{\widetilde{m}+1}$ for some $\widetilde{m}\in\mathbb{N}$, where $\beta_j$'s are non-zero constants.  Since then, the study of multi-point and integral constraints on the boundary has garnered significant attention due to its widespread applications in various fields of applied sciences~\cite{BK1974,GNA1971,LM2023,PS2018}. In these applications, the nonlocal term $\mathfrak{B}$ in \eqref{bdu} exhibits nonlocal dependencies on the unknowns $\ue(\xi_j)$ and $\int_{\Omega}w(y)\ue(y)\text{d}y$. We will now delve into the background of \eqref{equ}--\eqref{bdu} and underline the significance of our work.

 We distinguish two types within this category: linear type multi-point boundary condition $\mathfrak{B}=\sum_{j=1}^m\beta_j\ue(\xi_j)$ with nonzero constants $\beta_j$'s, and linear type integral boundary condition~$\mathfrak{B}=\int_{\Omega}w(y)\ue(y)\text{d}y$, where the former one is known as the multi-point boundary condition~({\color{black} see} \cite[Sections~2--3]{AR1981} and \cite{BGK1973,K1969}), and the latter one is referred to the integral boundary condition ({\color{black} see} \cite{CLM2024,L2023}). In the past few decades, the related problems have turned into an extremely active field of research in nonlinear elliptic equations~\eqref{equ}--\eqref{bdu} for specific cases of boundary conditions, such as when $\mathfrak{B}=\sum_{j=1}^m\beta_j\ue(\xi_j)$ with $\beta_j>0$ and $\sum_{j=1}^m\beta_j<1$, or when $\mathfrak{B}=\int_{\Omega}w(y)\ue(y)\text{d}y$ with $w\geq0$ and $\int_{\Omega}w(y)\text{d}y<1$. In these cases, the maximum principle can be applied to estimate the boundary value of $\ue$. {\color{black} However, it is worth noting that for general nonlocal boundary conditions, such as those described in \eqref{bdu}, the related issue presents a significant challenge. This is because the maximum principle is typically not applicable in these cases.}

It is expected that the existence and asymptotics (e.g., with respect to $\lambda\gg1$) of solution $\ue$ to \eqref{equ}--\eqref{bdu} may depend on the relation among the parameter $\lambda$, the form of $\mathfrak{B}\big(\ue(\xi_1),...,\ue(\xi_m),\int_{\Omega}w(y)\Phi(\ue(y))\text{d}y\big)$ and the property of the nonlinear source $f$.  {\color{black} To give the reader a concrete understanding of this work, we first consider a one-dimensional linear equation of the form \eqref{equ}--\eqref{bdu} in Section~\ref{sec-ov}. Specifically, we analyze the case with a multi-point boundary condition $\mathfrak{B}=\sum_{j=1}^m\beta_j\ue(\xi_j)$ and demonstrate that there exists a $\lambda>0$ (depending on $\beta_j$ and $\xi_j$) for which this equation has no solution.}

Recognizing the general invalidity of the maximum principle due to nonlocal boundary conditions, the study of semilinear problems \eqref{equ}--\eqref{bdu} continues to evolve. It should be emphasized that the existence issue for \eqref{equ}--\eqref{bdu} with respect to $\lambda>0$ is, of course, much more difficult. Accordingly,  this work will establish a sufficient condition of $\lambda$  such that \eqref{equ}--\eqref{bdu} has a solution $\ue$. Moreover, when $\mathfrak{B}:\mathbb{R}^{m+1}\to\mathbb{R}$ and $\Phi:\mathbb{R}\to\mathbb{R}$  are globally Lipschitz continuous, we verify the uniqueness for equation~\eqref{equ}--\eqref{bdu} with sufficiently large $\lambda$ ({\color{black} see} Theorem~\ref{m-thm}(I)) and corresponding maximum principles~({\color{black} see} Corollary~\ref{cor1}), where the asymptotic analysis with respect to $\lambda\gg1$ plays a crucial role in this study. We further explore the existence of the semilinear problem~\eqref{equ}--\eqref{bdu} when at least one of $\mathfrak{B}:\mathbb{R}^{m+1}\to\mathbb{R}$ or $\Phi:\mathbb{R}\to\mathbb{R}$  is only locally Lipschitz continuous. Among these investigations, it is noteworthy that without the refined asymptotics associated with $\lambda\gg1$, the construction of a contraction mapping (see \eqref{map-t}) for proving the existence and uniqueness would not be feasible ({\color{black} see} Proposition~\ref{prop-v} and Lemma~\ref{lem-t}). To the best of our understanding, however, such ideas are relatively scarce in the recent mathematical literature.

Finally, we want to point out that when $m=\infty$, the boundary condition~\eqref{bdu} includes the infinite-point boundary condition~\cite{EA2020}. In this situation, it becomes essential to consider appropriate conditions to ensure the well-defined nature of $\mathcal{B}$. The exploration of this particular issue will be undertaken in our forthcoming work~\cite{L2024}.\\
\noindent
{\bf\color{black} Outline.} To introduce the main results including the existence, uniqueness and maximum principle, the structure of this paper unfolds systematically, step by step. The remainder of this paper is organized into two sections. Section~\ref{sec-ov} presents illustrative examples highlighting  situations involving non-existence, uniqueness, and multiplicity of solutions for equation~\eqref{equ}--\eqref{bdu}. Based on these examples, specific assumptions for $f$, $\mathfrak{B}$, and $\Phi$ are formulated in Section~\ref{sec-os}.  To establish the existence theorem for equation~\eqref{equ}--\eqref{bdu}, we introduce a mapping in Section~\ref{sec-os} that corresponds to the boundary condition~\eqref{bdu}. The approach relies on the fixed-point argument and involves deriving a priori estimates for the solution of a corresponding local-type equation concerning $\lambda\gg1$. The main result will be articulated in Section~\ref{sec-fa}.

\section{\bf Heuristics and Preliminaries}\label{sec-ov}

In general situations, literature provides limited results concerning the existence of solutions to \eqref{equ}--\eqref{bdu}. Let us present some illustrative examples before delving into the theoretical results. It must be emphasized that although our examples focus on one-dimensional equations, 
{\color{black} the insights gained from them provide a valuable foundation for exploring the existence of solutions to high-dimensional equations.}

\subsection{\bf Examples for the non-existence, uniqueness and multiplicity} 
Specifically, we first consider the case $N=1$ and an equation
 \begin{equation}\label{1u-eq}
 -\ue''(x)+\lambda{\ue(x)}=0,\,\,x\in(L,R),    
 \end{equation}
  with the boundary condition corresponding to \eqref{bdu}:
\begin{equation}\label{1-Db}
    \ue(L)=0,\quad\ue(R)=g_R+\sum_{j=1}^{m}\beta_{j}\ue(\widetilde{\xi}_{j}),
\end{equation}
where  $g_R$ is a constant and $\widetilde{\xi}_{j}\in(L,R)$ with  $\widetilde{\xi}_{j}<\widetilde{\xi}_{j+1}$. We provide a practical example for the effect of $\beta_j$'s and $\widetilde{\xi}_{j}$'s on the existence and uniqueness/multiplicity of solutions. 
\begin{example}\label{ex1}
 Define
  \begin{equation*}
 \eta(s)=\text{e}^{s(R-L)}-\text{e}^{-s(R-L)}-\sum_{j=1}^m\beta_j\left(\text{e}^{s(\widetilde{\xi}_{j}-L)}-\text{e}^{-s(\widetilde{\xi}_{j}-L)}\right)     
  \end{equation*}
  and a set 
\begin{equation}\label{sel}
 \mathfrak{S}_{\eta}:=\left\{\lambda>0:\,\eta(\sqrt{\lambda})\neq0\right\}.   
\end{equation}
 When $\lambda\in\mathfrak{S}_{\eta}$,  equation \eqref{1u-eq}--\eqref{1-Db} has a unique solution
\begin{equation}\label{sou}
    \ue(x)=\frac{g_R}{\eta(\sqrt{\lambda})}\left(\text{e}^{\sqrt{\lambda}(x-L)}-\text{e}^{-\sqrt{\lambda}(x-L)}\right).
\end{equation}
However, when $\lambda\not\in\mathfrak{S}_{\eta}$, i.e., $\eta(\sqrt{\lambda})=0$, the multiplicity and non-existence of $\ue$ may depend on  $g_R$:
\begin{itemize}
    \item[$\blacksquare$] If $g_R=0$, this equation has infinitely many solutions since for arbitrary $b^*$, $\ue^*(x)=b^*\left(\text{e}^{\sqrt{\lambda}(x-L)}-\text{e}^{-\sqrt{\lambda}(x-L)}\right)$
   is a solution of \eqref{1u-eq} with the boundary condition~\eqref{1-Db}.
     \item[$\blacksquare$] If $g_R\neq0$, then \eqref{1u-eq} with the boundary condition~\eqref{1-Db} has no solution.
     \end{itemize}
Besides, for arbitrary $\boldsymbol{\beta}=(\beta_1,...,\beta_m)$ and $\boldsymbol{\xi}_R=(\xi_{1,R},...,\xi_{m,R})$, we have
\begin{equation}\label{iql}
\left[\lambda^*(\boldsymbol{\beta},\boldsymbol{\xi}_R),\infty\right)\subset\mathfrak{S}_{\eta},\quad
\lambda^*(\boldsymbol{\beta},\boldsymbol{\xi}_R)=\left(\frac{1}{R-\xi_{R,m}}\log\max\left\{\mathcal{B},\frac{1}{\mathcal{B}}\right\}\right)^2,
\end{equation}
where $\mathcal{B}=\sum_{j=1}^m|\beta_j|>0$.\footnote{Note that $\lambda\geq\lambda^*(\boldsymbol{\beta},\boldsymbol{\xi}_R)$ implies $\text{e}^{\sqrt{\lambda}(R-L)}>\sum_{j=1}^m|\beta_j|\text{e}^{\sqrt{\lambda}(\widetilde{\xi}_{j}-L)}$ and $\text{e}^{-\sqrt{\lambda}(R-L)}<\sum_{j=1}^m|\beta_j|\text{e}^{-\sqrt{\lambda}(\widetilde{\xi}_{j}-L)}$ which guarantees  $\eta(\sqrt{\lambda})>0$.}
We shall stress that the structure of $\mathfrak{S}_{\eta}$ depends on $\beta_j$'s and $\widetilde{\xi}_{j}$'s, which {\color{black}is} introduced as follows.

\begin{itemize}
 \item[$\blacksquare$] For the case $  \sum_{j=1}^m\max\{\beta_j,0\}(\widetilde{\xi}_{j}-L)\leq{R-L}$,
   we have $\mathfrak{S}_{\eta}=(0,\infty)$,\footnote{
Note that $\eta(0)=0$ and $\eta'(s)=(R-L)(\text{e}^{s(R-L)}+\text{e}^{-s(R-L)})-\sum_{j=1}^m\beta_j(\widetilde{\xi}_{j}-L)(\text{e}^{s(\widetilde{\xi}_{j}-L)}+\text{e}^{-s(\widetilde{\xi}_{j}-L)})>{\left(R-L-\sum_{j=1}^m\max\{\beta_j,0\}(\widetilde{\xi}_{j}-L))\right)}(\text{e}^{s(R-L)}+\text{e}^{-s(R-L)})\geq0$ since $0<\widetilde{\xi}_{j}-L<R-L$. Hence, $\eta(\sqrt{\lambda})>0$ for $\lambda>0$.} which results in that for each $\lambda>0$, \eqref{1u-eq} with the boundary condition~\eqref{1-Db} has a unique solution~\eqref{sou}.
  \item[$\blacksquare$] If $\sum_{j=1}^m\beta_j(\widetilde{\xi}_{j}-L)>{R-L}$ (note that $\beta_j$ is not necessarily positive), then $\mathfrak{S}_{\eta}\neq(0,\infty)$.\footnote{Note that $\eta(0)=0$, $\eta'(0)=2{\left(R-L-\sum_{j=1}^m\beta_j(\widetilde{\xi}_{j}-L))\right)}<0$ and $\lim_{s\to\infty}\eta(s)=\infty$. This implies that $\eta(\sqrt{\lambda})=0$ has at least one positive root.}
    \end{itemize}
\end{example}
Example~\ref{ex1} shows that for given $\beta_j\neq0$ and distinct points $\widetilde{\xi}_{j}\in(L,R)$, $j=1,...,m$ (without any other constraint), \eqref{sel} and \eqref{iql} ensure the existence and uniqueness of \eqref{1u-eq} with the boundary condition~\eqref{1-Db} as $\lambda>0$ is sufficiently large. On the other hand, it is worth stressing that when either $\mathfrak{B}:\mathbb{R}^{m+1}\to\mathbb{R}$ or $\Phi:\mathbb{R}\to\mathbb{R}$ is locally Lipschitz continuous, \eqref{equ}--\eqref{bdu} may admit multiple solutions. The following example focuses on the case of integral boundary conditions. 
\begin{example}\label{ex2}
For simplicity, we let $(L, R) = (0, 1)$ in \eqref{1u-eq}. We consider equation \eqref{1u-eq} with the boundary conditions $\ue(0) = 0$ and $\ue(1) = \int_0^1 \ue$. Then, for each $\lambda > 0$, it can be readily verified that it possesses a unique solution $\ue \equiv 0$. This case corresponds to \eqref{bdu} with both globally Lipschitz continuous functions $\mathfrak{B}:\mathbb{R}^{m+1}\to\mathbb{R}$ and $\Phi:\mathbb{R}\to\mathbb{R}$ ({\color{black} see \eqref{lip-b}} in Section~\ref{sec-os}). However, if either $\mathfrak{B}$ or $\Phi$ is locally Lipschitz continuous, the uniqueness may no longer be guaranteed.
\begin{itemize}
 \item[$\blacksquare$] \eqref{1u-eq} with the boundary conditions
\begin{equation*}
\ue(0)=0\quad\text{and}\quad\ue(1)=\sqrt{\int_0^1|\ue|\,\text{d}y}  
 \end{equation*}
 has two solutions $\ue\equiv0$ and 
 \begin{equation*}
    \ue(x)=\frac{\sinh(\sqrt{\lambda}x)}{2\sqrt{\lambda}\cosh^2\frac{\sqrt{\lambda}}{2}}\xrightarrow{\lambda\to\infty}0\,\,\text{uniformly\,\,in}\,\,[0,1]. 
 \end{equation*}
As a consequence, this equation admits two distinct solutions, both of which tends to zero uniformly on $[0,1]$ when $\lambda\gg1$.
 \item[$\blacksquare$]  {\color{black} We observe that the equation \eqref{1u-eq} subject to the boundary conditions
\begin{equation}\label{exbd1}
\ue(0)=0\quad\text{and}\quad\ue(1)=\int_0^1\ue^2(y)\,\text{d}y  
 \end{equation}
 admits two distinct solutions, one being the trivial solution $\ue\equiv0$ and the other being the nontrivial solution  
 \begin{equation}\label{2025u}
\ue(x)=\frac{2\sinh\sqrt{\lambda}}{\frac{\sinh(2\sqrt{\lambda})}{2\sqrt{\lambda}}-1}\sinh(\sqrt{\lambda}x).    \end{equation}
Moreover, let $0<\kappa<1$ be a constant independent of $\lambda$. Then, \eqref{2025u} satisfies~$\lim_{\lambda\to\infty}\max_{[0,1-\kappa]}|\ue|\xrightarrow{\lambda\to\infty}0$, while its boundary asymptotics at $x=1$ satisfies $\lim_{\lambda\to\infty}\frac{\ue(1)}{2\sqrt{\lambda}}=1$. As a consequence, under the integral boundary conditions \eqref{exbd1}, this equation admits two solutions. One of these solutions remains uniformly bounded on $[0,1]$, whereas the other exhibits asymptotic blow-up as $\lambda\to\infty$.}
\end{itemize}
\end{example}

Based on the insights gained from Examples \ref{ex1} and \ref{ex2}, we consider a class of nonlinear sources $f$ introduced in Section~\ref{sec-os}. Our objective is to establish the existence of solutions to \eqref{equ}--\eqref{bdu} for sufficiently large $\lambda>0$. Of particular interest is the case when both $\mathfrak{B}:\mathbb{R}^{m+1}\to\mathbb{R}$ and $\Phi:\mathbb{R}\to\mathbb{R}$ are globally Lipschitz continuous. In this situation, we will prove that for sufficiently large $\lambda$, \eqref{equ}--\eqref{bdu} possesses a unique solution. This result stands in contrast to situations where either $\mathfrak{B}:\mathbb{R}^{m+1}\to\mathbb{R}$ or $\Phi:\mathbb{R}\to\mathbb{R}$ is only locally Lipschitz continuous.

\subsection{\bf Overview and strategy}\label{sec-os}
In what follows, we shall set
\begin{equation}\label{mxi}
 \mathfrak{m}(\boldsymbol{\xi}):=\min_{1\leq {j}\leq {m}}\mathrm{dist}(\xi_j,\partial\Omega)>0\quad(\boldsymbol{\xi}=(\xi_1,...,\xi_m)).  
\end{equation}
For $0<\delta<\mathrm{diam}(\Omega)$, we define 
\begin{equation}\label{i-O}
    \Omega_{\delta}:=\{x\in\Omega:\mathrm{dist}(x,\partial\Omega)>\delta\}.
\end{equation}

Let us recall Example~\ref{ex1} again that the linear equation~\eqref{1u-eq} with the standard Dirichlet boundary conditions on $x=L$ and $x=R$ has a unique solution. However,  under the multi-point boundary condition~\eqref{1-Db}, the existence or uniqueness of solutions to  \eqref{1u-eq} may not hold, where $\lambda$ plays a crucial role in this issue. Based on Example~\ref{ex1}(II), the main motivation is to explore the existence of solutions to \eqref{equ}--\eqref{bdu} with sufficiently large $\lambda$. We shall focus mainly on $f(x,s)$ as follows~({\color{black} see} \cite{BS1984,L2016,Lee2020,S1993}):
\begin{itemize}
    \item[\bf(f1).] For each $s\in\mathbb{R}$, $f(\,\cdot\,,s)\in\text{C}^{0,\tau}(\overline{\Omega};\mathbb{R})$.
    \item[\bf(f2).] For each $x\in\overline{\Omega}$, $f(x,\,\cdot\,)\in\text{C}^{1}(\mathbb{R};\mathbb{R})$ satisfies $\frac{\partial{f}}{\partial{s}}\geq\theta_0$ for some positive constant $\theta_0$, and there uniquely exists~$h\in\text{C}^{2,\tau}(\overline{\Omega};\mathbb{R})$ such that
    \begin{equation}\label{fh}
        f(x,h(x))=0.
    \end{equation}
\end{itemize}
An example for (f1)--(f2) is $f(x,s)=a(x)(s-h(x))$ with a positive function $a\in\text{C}^{0,\tau}(\overline{\Omega};(0,\infty))$.

For the boundary condition~\eqref{bdu}, we focus on the case that $\mathfrak{B}:\mathbb{R}^{m+1}\to\mathbb{R}$ and $\Phi:\mathbb{R}\to\mathbb{R}$ are at least locally Lipschitz continuous in each variable. For simplicity, it suffices to consider the following three cases for $\mathfrak{B}$ and $\Phi$:
\begin{itemize}
    \item[\bf {\color{black} (b1)}.] Both $\mathfrak{B}:\mathbb{R}^{m+1}\to\mathbb{R}$ and $\Phi:\mathbb{R}\to\mathbb{R}$ are globally Lipschitz continuous. To be specific, we assume that there exists $\ell>0$ such that 
 \begin{equation}\label{lip-b}
\begin{cases}
|\mathfrak{B}(s_1,...,s_{m+1})-\mathfrak{B}(t_1,...,t_{m+1})|\leq\ell\displaystyle\sum_{j=1}^{m+1}|s_j-t_j|,\\
|\Phi(s_1)-\Phi(t_1)|\leq\ell|s_1-t_1|,
\end{cases}   
\end{equation}
for all $s_j,t_j\in\mathbb{R}$. 
    \item[\bf {\color{black} (b2)}.] $\mathfrak{B}:\mathbb{R}^{m+1}\to\mathbb{R}$ and $\Phi:\mathbb{R}\to\mathbb{R}$ are only locally Lipschitz continuous.
    \item[\bf {\color{black} (b3)}.] Either $\mathfrak{B}:\mathbb{R}^{m+1}\to\mathbb{R}$ or $\Phi:\mathbb{R}\to\mathbb{R}$ is only locally Lipschitz continuous.
\end{itemize}

For {\color{black} (b1)}, classical examples include $\mathfrak{B}=\sum_{j=1}^m\beta_j\ue(\xi_j)$ and $\mathfrak{B}=\int_{\Omega}w(y)\ue(y)\text{d}y$ with $\Phi(s)=s$, as mentioned previously. Both cases {\color{black} (b2)} and {\color{black} (b3)} find extensive applications in various practical situations, including polynomials, rational functions with positive denominator polynomials, and more. Therefore, it is important to emphasize that this study covers a wider spectrum of nonlocal-type boundary conditions. It is expected that locally Lipschitz continuous functions $\mathfrak{B}$ and $\Phi$ are  applicable to the practical models, and contribute to applied mathematics.


\subsection*{\bf  A fixed point argument based on asymptotic analysis.} 
 The main contribution of this work is the existence issue of \eqref{equ}--\eqref{bdu}. Consider a close relation with the equation
\begin{align}\label{eqv}
    \begin{cases}
 -\nabla\cdot(D(x)\nabla\ve(x))+{\lambda}f(x,\ve(x))=0,&\quad\,x\in\Omega,\vspace{3pt}\\  
 \ve(x)=g(x)+\mu,&\quad\,x\in\partial\Omega,
    \end{cases}
\end{align}
 where the parameter $\mu$ corresponds to the nonlocal part $\mathfrak{B}$ of \eqref{bdu}. It is known that\footnote{Note that equation \eqref{eqv} has a corresponding energy functional $E_{\lambda,\mu}[v]=\int_{\Omega}\left(\frac{D}{2}|\nabla{v}|^2+{\lambda}F(x,v)\right)\dx$ over  the space $\mathcal{H}=\{v\in\text{H}^1(\Omega):\,v-g-\mu\in\text{H}_0^1(\Omega)\}$, where we notice that  $F(x,t):=\int_0^tf(x,s)\,\dss\geq-\frac{\theta_0}{2}\max_{\overline{\Omega}}h^2$ is strictly convex  to $t$ and has a finite infimum (by (f2)). Hence, $E_{\mu,\lambda}[v]$ is a strictly convex functional and we can apply the direct method in the calculus of variations \cite{GT1983} to obtaining that $E_{\mu,\lambda}$ has a unique minimizer $\ve^*$ over $\mathcal{H}$. Since $\Omega$ is bounded with smooth boundary~$\partial\Omega$, $D\in\mathrm{C}^{1,\tau}(\overline{\Omega};(0,\infty))$ and $g\in\mathrm{C}^{0,\tau}(\partial\Omega;\mathbb{R})$, the regularity theory for elliptic equations implies that $\ve^*\in\mathrm{C}^{2,\tau}(\overline{\Omega};\mathbb{R})$ is a classical solution of \eqref{eqv}. The uniqueness of \eqref{eqv} follows immediately from the fact that $f(x,s)$ is strictly increasing to $s$.}  under (f1) and (f2), for each $(\lambda,\mu)\in(0,\infty)\times\mathbb{R}$, equation~\eqref{eqv} has a unique solution~$\ve\in\mathrm{C}^{2,\tau}(\overline{\Omega};\mathbb{R})$. As a consequence, for the existence of solutions $\ue$ to \eqref{equ}--\eqref{bdu}, it suffices to establish a mapping $\boldsymbol{\mathsf{T}}_{\lambda}:\mathbb{R}\to\mathbb{R}$  to connect the equation~\eqref{equ}--\eqref{bdu} with the equation~\eqref{eqv}:
 \begin{equation}\label{map-t}   \boldsymbol{\mathsf{T}}_{\lambda}(\mu):=\mathfrak{B}\big(\ve(\xi_1),...,\ve(\xi_m),\int_{\Omega}w(y)\Phi(\ve(y))\text{d}y\big).
\end{equation} 
The definition of \eqref{map-t} is valid for every $\lambda>0$ and $\mu\in\mathbb{R}$, as the corresponding equation~\eqref{eqv} possesses a unique solution $\ve$. Furthermore, for $\lambda>0$, we have that: 
\begin{itemize}
    \item if $\mu:=\mu(\lambda)\in\mathbb{R}$ depending on $\lambda$ satisfies \begin{equation}\label{mubeta} \mu=\mathfrak{B}\big(\ve(\xi_1),...,\ve(\xi_m),\int_{\Omega}w(y)\Phi(\ve(y))\text{d}y\big), 
 \end{equation}  
 then $\ve$ is a solution of~\eqref{equ}--\eqref{bdu};
    \item if \eqref{equ}--\eqref{bdu} has a solution $\ue$, one sets 
    \begin{equation*}     \mu^{\star}:=\mu^{\star}(\lambda)=\mathfrak{B}\big(\ue(\xi_1),...,\ue(\xi_m),\int_{\Omega}w(y)\Phi(\ue(y))\text{d}y\big),
    \end{equation*}
     then by the uniqueness of equation~\eqref{eqv} there holds $v_{\lambda,\mu^{\star}}=\ue$.
\end{itemize}
As a consequence, \eqref{equ}--\eqref{bdu} has a solution $\ue$ if and only if there exists $\mu:=\mu(\lambda)$ (depending on $\lambda$) satisfies the implicit form \eqref{mubeta} such that $\ue=v_{\lambda,\mu(\lambda)}$. Accordingly, the existence of \eqref{equ}--\eqref{bdu} is equivalently converted to the existence of fixed points of a mapping $\boldsymbol{\mathsf{T}}_{\lambda}:\mathbb{R}\to\mathbb{R}$ defined by \eqref{map-t}.

 In Theorem~\ref{m-thm} ({\color{black} see} Section~\ref{sec-fa}), we will show that under \eqref{lip-b},  as $\lambda>0$ is sufficiently large, $\boldsymbol{\mathsf{T}}_{\lambda}$ has a fixed point, and equation~\eqref{equ}--\eqref{bdu} has a  solution~$\ue$. In particular, when both $\mathfrak{B}:\mathbb{R}^{m+1}\to\mathbb{R}$ and $\Phi:\mathbb{R}\to\mathbb{R}$ are globally Lipschitz continuous, we ensure the uniqueness of the fixed point of $\boldsymbol{\mathsf{T}}_{\lambda}$ as $\lambda\gg1$. This guarantees the existence of a unique solution $\ue$ to equation~\eqref{equ}--\eqref{bdu}. The main idea revolves around establishing the asymptotic estimates for the unique solution~$\ve$ of \eqref{eqv} with respect to $\lambda\gg1$.



\section{\bf The main result: existence, uniqueness and maximum principles}\label{sec-fa}

To employ the fixed point argument to \eqref{map-t}, we shall establish  useful estimates for solutions of \eqref{eqv} and the mapping~$\boldsymbol{\mathsf{T}}_{\lambda}$ defined by \eqref{map-t} with respect to $\lambda\gg1$; see Section~\ref{sec-vt}.  This allows us to establish the following theorem.
\begin{theorem}\label{m-thm}
For given $\xi_j\in\Omega$,  $j=1,...,m$, we define $\mathfrak{m}(\boldsymbol{\xi})$ in \eqref{mxi}, and assume that $f(x,s)$ satisfies {\text(f1)--(f2)}. Then, for $D\in\mathrm{C}^{1,\tau}(\overline{\Omega};(0,\infty))$, $g\in\mathrm{C}^{0,\tau}(\partial\Omega;\mathbb{R})$ and $w\in\text{L}^1(\Omega)$, we consider three cases {\color{black} (b1)}--{\color{black} (b3)} for $\mathfrak{B}:\mathbb{R}^{m+1}\to\mathbb{R}$ and $\Phi:\mathbb{R}\to\mathbb{R}$.
\begin{itemize}
    \item[(I)] If both $\mathfrak{B}:\mathbb{R}^{m+1}\to\mathbb{R}$ and $\Phi:\mathbb{R}\to\mathbb{R}$ are globally Lipschitz continuous (i.e., 
 the case {\color{black} (b1)}), then there exists $\lambda_0>0$ depending on $\ell$ and $\mathfrak{m}(\boldsymbol{\xi})$ such that for each $\lambda>\lambda_0$, \eqref{equ}--\eqref{bdu} has a unique solution  $\ue\in\mathrm{C}^{2,\tau}(\overline{\Omega};\mathbb{R})$. Moreover,
     \begin{equation}\label{m-ax}
  \begin{aligned}   &\lim_{\lambda\to\infty}\left(\max_{x\in\overline{\Omega_{\delta}}}|u_{\lambda}(x)-h(x)|+\max_{x\in\partial\Omega}\left|u_{\lambda}(x)-g(x)-\boldsymbol{\mathsf{B}}[h]\right|\right)=0,
\end{aligned}       
     \end{equation}
     where $\Omega_{\delta}$ is defined by \eqref{i-O}, and
     \begin{equation*}
     \boldsymbol{\mathsf{B}}[h]:=\mathfrak{B}\big(h(\xi_1),...,h(\xi_m),\int_{\Omega}w(y)\Phi(h(y))\text{d}y\big).    
     \end{equation*}
    \item[(II)] If at least one of $\mathfrak{B}:\mathbb{R}^{m+1}\to\mathbb{R}$ or $\Phi:\mathbb{R}\to\mathbb{R}$ is only locally Lipschitz continuous (i.e., the cases {\color{black} (b2)} and {\color{black} (b3)}), then there 
exists $\lambda_0^*>0$ such that as $\lambda>\lambda_0^*$, \eqref{equ}--\eqref{bdu} has at least one solution.
\end{itemize}
\end{theorem}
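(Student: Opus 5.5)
We reduce the theorem to a fixed-point problem for the scalar map $\boldsymbol{\mathsf{T}}_{\lambda}$ in \eqref{map-t}. The plan is to derive $\lambda$-uniform asymptotic estimates for the unique solution $\ve$ of \eqref{eqv} and then apply the contraction principle in case (I) and a Brouwer-type (intermediate value) argument in case (II).

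\textbf{Step 1: estimates on $\ve$.} Set $M:=\max_{\partial\Omega}|g|+|\mu|+\max_{\overline\Omega}|h|$. The maximum principle together with (f2) (monotonicity of $f$ in $s$ and $f(x,h(x))=0$) gives
\begin{equation*}
|\ve|\le M \quad\text{on } \overline{\Omega}.
\end{equation*}
Next, consider $\ve-h$. By the mean value theorem it satisfies a linear equation
\begin{equation*}
-\nabla\cdot(D\nabla(\ve-h))+\lambda c(x)(\ve-h)=\nabla\cdot(D\nabla h),\qquad c\ge\theta_0 .
\end{equation*}
Compare it with a barrier of the form $C\lambda^{-1}+C'\,\mathrm{e}^{-\sqrt{\lambda}\,\gamma\,\mathrm{dist}(x,\partial\Omega)}$, or with a sum of radial exponential barriers built on balls, with $\gamma$ small depending on $\theta_0$ and $D$. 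This yields
\begin{equation}\label{prop-est}
|\ve(x)-h(x)|\le \frac{C}{\lambda}+C(1+|\mu|)\,\mathrm{e}^{-\gamma\sqrt{\lambda}\,\mathrm{dist}(x,\partial\Omega)}.
\end{equation}
For the difference of two solutions, $\psi=v_{\lambda,\mu_1}-v_{\lambda,\mu_2}$ solves a linear equation with zeroth-order coefficient at least $\lambda\theta_0$ and boundary value $\mu_1-\mu_2$. The maximum principle gives $|\psi|\le|\mu_1-\mu_2|$ everywhere, and the same barrier gives
\begin{equation*}
|\psi(x)|\le|\mu_1-\mu_2|\,\mathrm{e}^{-\gamma\sqrt{\lambda}\,\mathrm{dist}(x,\partial\Omega)}.
\end{equation*}
In particular $|\psi(\xi_j)|\le|\mu_1-\mu_2|\,\mathrm{e}^{-\gamma\sqrt\lambda\,\mathfrak{m}(\boldsymbol{\xi})}$. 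For the integral term, split $\Omega=\Omega_\delta\cup(\Omega\setminus\Omega_\delta)$. Since $w\in \mathrm{L}^1$, absolute continuity of the integral makes $\int_{\Omega\setminus\Omega_\delta}|w|$ small as $\delta\to0$, while on $\Omega_\delta$ the exponential factor is small. Hence
\begin{equation*}
\int_\Omega|w||\psi|\le\epsilon(\lambda)|\mu_1-\mu_2|\quad\text{with } \epsilon(\lambda)\to0 .
\end{equation*}

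\textbf{Step 2: case (I).} By \eqref{lip-b},
\begin{equation*}
|\boldsymbol{\mathsf{T}}_{\lambda}(\mu_1)-\boldsymbol{\mathsf{T}}_{\lambda}(\mu_2)|\le \ell\Bigl(m\,\mathrm{e}^{-\gamma\sqrt\lambda\,\mathfrak{m}(\boldsymbol{\xi})}+\ell\,\epsilon(\lambda)\Bigr)|\mu_1-\mu_2|,
\end{equation*}
and the factor is $<1/2$ for $\lambda>\lambda_0$. Banach's fixed point theorem on $\mathbb{R}$ then gives a unique $\mu(\lambda)$. By the equivalence established in Section~\ref{sec-os}, this gives existence and uniqueness of $\ue=v_{\lambda,\mu(\lambda)}$, with $\mathrm{C}^{2,\tau}$ regularity inherited from \eqref{eqv}.

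For \eqref{m-ax}, first note that $|\mu(\lambda)|$ stays bounded. Indeed,
\begin{equation*}
|\mu|\le|\boldsymbol{\mathsf{T}}_\lambda(\mu)-\boldsymbol{\mathsf{T}}_\lambda(0)|+|\boldsymbol{\mathsf{T}}_\lambda(0)|\le \tfrac12|\mu|+C,
\end{equation*}
since \eqref{prop-est} bounds $\boldsymbol{\mathsf{T}}_\lambda(0)$. Then \eqref{prop-est} gives $\ue\to h$ uniformly on $\overline{\Omega_\delta}$. Using the same splitting of the integral and the Lipschitz continuity of $\mathfrak{B}$ and $\Phi$, we get $\mu(\lambda)=\boldsymbol{\mathsf{T}}_\lambda(\mu(\lambda))\to\boldsymbol{\mathsf{B}}[h]$, which gives the boundary part of \eqref{m-ax}.

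\textbf{Step 3: case (II).} Fix $R:=|\boldsymbol{\mathsf{B}}[h]|+1$ and restrict to $\mu\in[-R,R]$. There all the quantities $\ve(\xi_j)$ and $\ve(y)$ lie in a fixed compact set, by the bound $|\ve|\le M$ with $M$ depending only on $R$. On that compact set $\mathfrak{B}$ and $\Phi$ have some Lipschitz constant $\ell_R$. Repeating Step 2 with $\ell_R$ shows:
\begin{itemize}
\item $\boldsymbol{\mathsf{T}}_\lambda$ is continuous (in fact a contraction) on $[-R,R]$;
\item $\sup_{|\mu|\le R}|\boldsymbol{\mathsf{T}}_\lambda(\mu)-\boldsymbol{\mathsf{B}}[h]|\to0$.
\end{itemize}
Hence for $\lambda>\lambda_0^*$ the map sends $[-R,R]$ into itself, and a fixed point exists by the intermediate value theorem (or by contraction on this interval). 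Uniqueness is not claimed globally, consistent with Example~\ref{ex2}.

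\textbf{Main obstacle.} The main obstacle is Step 1: producing the interior exponential decay barrier for a general divergence-form operator with variable $D$ in $N$ dimensions, uniformly in $\mu$. I would first try the radial supersolution
\begin{equation*}
\cosh\bigl(\gamma\sqrt\lambda|x-\xi|\bigr)/\cosh\bigl(\gamma\sqrt\lambda\,r\bigr)
\end{equation*}
on balls $B_r(\xi)\subset\Omega$. Since $D\in \mathrm{C}^{1}$, the drift term $\nabla D\cdot\nabla$ is $O(\sqrt\lambda)$ and is absorbed by $\lambda\theta_0$ once $\gamma$ is chosen small.
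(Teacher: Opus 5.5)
Your proposal is correct and follows essentially the paper's route. You reduce to fixed points of $\boldsymbol{\mathsf{T}}_{\lambda}$ and prove the interior exponential estimates for $\ve-h$ and for $v_{\lambda,\mu_1}-v_{\lambda,\mu_2}$ (the paper's \eqref{esh}--\eqref{v12}). You then split $\int_\Omega|w|\,\cdot$ into a thin boundary layer and its complement, and close with a global contraction under (b1) and a self-map/contraction argument on a bounded interval $[-\Lambda,\Lambda]$ under (b2)--(b3).

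The only real difference is the barrier. The paper compares the subsolution $V^2-M_0^2/(\lambda\theta_0)^2$ with $\exp(-K_0\sqrt{\lambda}\,\mathsf{d}^*)$ in a boundary strip where the distance function is $\mathrm{C}^2$, then applies the maximum principle inside. Your radial $\cosh$ barrier on $B_{\mathrm{dist}(x,\partial\Omega)}(x)$ works directly, with a harmless factor $2$ since $1/\cosh t\le 2\mathrm{e}^{-t}$. It is the version you should use: the global barrier $\mathrm{e}^{-\gamma\sqrt{\lambda}\,\mathrm{dist}(x,\partial\Omega)}$ you mention first is not a supersolution across the ridge. There the concave kinks of $\mathrm{dist}(\cdot,\partial\Omega)$ become convex kinks of the exponential.
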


For (II), we can only verify the existence of \eqref{equ}--\eqref{bdu}. Under {\color{black} (b2)}, this equation may have multiple solutions as $\lambda>0$ is sufficiently large (see Example~\ref{ex2} and the observation from Remarks \ref{rk34} and \ref{rk37}).

As a practical application of Theorem~\ref{m-thm}(I), we further discuss the maximum principles of equation~\eqref{equ} with two type boundary conditions. The first one is the multi-point boundary condition
\begin{equation}\label{mbm}  \ue(x)=g(x)+\sum_{j=1}^m\beta_j\ue(\xi_j),\quad\,x\in\partial\Omega,
\end{equation}
which corresponds to the boundary condition~\eqref{bdu} with $\mathfrak{B}=\sum_{j=1}^m\beta_j\ue(\xi_j)$ and $\beta_j\neq0$, $j=1,...,m$. The other one is the integral boundary condition
\begin{equation}\label{bdi}
\ue(x)=g(x)+\int_{\Omega}w(y)\ue(y)\text{d}y,\quad\,x\in\partial\Omega,
\end{equation}
corresponding to the boundary condition~\eqref{bdu} with $\mathfrak{B}=\int_{\Omega}w(y)\ue(y)\text{d}y$. For $\xi_j\in\Omega$ and $h\in\text{C}^{2,\tau}(\overline{\Omega};\mathbb{R})$ satisfying \eqref{fh}, let $g$ and coefficients $\beta_j$'s in the boundary condition~\eqref{mbm} satisfy 
\begin{equation}\label{gh-m}
\min_{\overline{\Omega}}h-\min_{\partial\Omega}g<\sum_{j=1}^m\beta_jh(\xi_j)<\max_{\overline{\Omega}}h-\max_{\partial\Omega}g.
\end{equation}
Then by \eqref{m-ax}, we have
\begin{equation}\label{m-gh}
    \lim_{\lambda\to\infty}(\min_{x\in\overline{\Omega}}\ue,\max_{x\in\overline{\Omega}}\ue)=(\min_{x\in\overline{\Omega}}h,\max_{x\in\overline{\Omega}}h).
\end{equation}
Similarly, under the boundary condition~\eqref{bdi}, \eqref{m-gh} holds when $g$ and $w$ satisfy
\begin{equation}\label{gh-i}
\min_{\overline{\Omega}}h-\min_{\partial\Omega}g<\int_{\Omega}w(y)h(y)\text{d}y<\max_{\overline{\Omega}}h-\max_{\partial\Omega}g,
\end{equation}
This motivates us to state maximum principles for solutions of  \eqref{equ} with the boundary condition~\eqref{mbm} and the boundary condition~\eqref{bdi}, respectively, as $\lambda$ is sufficiently large.  In this context, the relation between $\sum_{j=1}^m\beta_jh(\xi_j)$ (or $\int_{\Omega}w(y)\ue(y)\text{d}y$) and the maximum/minimum values of $h$ over $\overline{\Omega}$  plays a crucial role. More precisely, by \eqref{gh-m}--\eqref{gh-i}, we have the following result.
\begin{corollary}[\bf  Maximum principle]\label{cor1} Under the same hypotheses as in Theorem~\ref{m-thm}, we assume $g\geq0$ on $\partial\Omega$ $(g\leq0$ on $\partial\Omega$, {respectively}$)$. Then there exists a positive constant $\bar{\lambda}$ relying primarily on $\mathfrak{m}(\boldsymbol{\xi})$, $h$ and $g$ such that for each $\lambda>\bar{\lambda}$, we have the following maximum principle.
\begin{itemize}
    \item[(i)] If $\beta_j$'s and $\xi_j$'s satisfy  \begin{equation*}
\sum_{j=1}^m\beta_jh(\xi_j)>\max_{\overline{\Omega}}h\quad(\sum_{j=1}^m\beta_jh(\xi_j)<\min_{\overline{\Omega}}h,\,\,\text{respectively}),
    \end{equation*} 
 the solution~$\ue$ of \eqref{equ} with the boundary condition~\eqref{mbm} obeys
\begin{equation}\label{max-mi}
\max_{\overline{\Omega}}\ue=\max_{\partial\Omega}\ue\quad(\min_{\overline{\Omega}}\ue=\min_{\partial\Omega}\ue,\,\,\text{respectively}).
\end{equation}
    \item[(ii)] If $w\in\text{L}^1(\Omega)$ satisfies \begin{equation*}
\int_{\Omega}w(y)h(y)\text{d}y>\max_{\overline{\Omega}}h\quad(\int_{\Omega}w(y)h(y)\text{d}y<\min_{\overline{\Omega}}h,\,\,\text{respectively}),
    \end{equation*} 
 the solution~$\ue$ of \eqref{equ} with the boundary condition~\eqref{mbm} obeys \eqref{max-mi}.
\end{itemize}
 
\end{corollary}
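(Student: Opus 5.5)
The plan is to derive Corollary~\ref{cor1} from the asymptotics \eqref{m-ax} of Theorem~\ref{m-thm}(I), applied to the globally Lipschitz boundary operators $\mathfrak{B}=\sum_{j=1}^m\beta_js_j$ (case (i)) and $\mathfrak{B}=s_{m+1}$ with $\Phi(s)=s$ (case (ii)). Both fall under (b1) with $\ell=\max\{1,|\beta_1|,\dots,|\beta_m|\}$. Hence, for $\lambda>\lambda_0$, there is a unique solution $\ue$, and $\boldsymbol{\mathsf{B}}[h]$ equals $\beta:=\sum_j\beta_jh(\xi_j)$ in case (i) and $\int_\Omega wh$ in case (ii). I treat the case $g\geq0$ with $\beta>\max_{\overline{\Omega}}h$; the case $g\le0$ follows symmetrically, or by considering $-\ue$, $-h$, $-g$ and the reflected nonlinearity $-f(x,-s)$, which still satisfies (f1)--(f2).

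Set $\varepsilon:=\tfrac13\bigl(\beta-\max_{\overline{\Omega}}h\bigr)>0$. By \eqref{m-ax}, there is $\bar\lambda\ge\lambda_0$ such that for $\lambda>\bar\lambda$ we have $\ue\ge g+\beta-\varepsilon\ge\beta-\varepsilon$ on $\partial\Omega$ (this uses $g\ge0$), and $\ue\le h+\varepsilon\le \max h+\varepsilon<\beta-\varepsilon$ on $\overline{\Omega_\delta}$. Here $\delta$ is any fixed number in $(0,\mathfrak{m}(\boldsymbol{\xi}))$; this is why $\bar\lambda$ depends on $\mathfrak{m}(\boldsymbol{\xi})$, $h$, $g$. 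Consequently $\max_{\overline{\Omega_\delta}}\ue<\min_{\partial\Omega}\ue\le\max_{\partial\Omega}\ue$, so any interior maximum point over $\overline{\Omega}$ that is not on $\partial\Omega$ must lie in the collar $\Omega\setminus\overline{\Omega_\delta}$.

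The main obstacle is ruling out a maximum in this boundary layer, because \eqref{m-ax} gives no information there. I will use the equation itself. Suppose $\ue$ attains $M:=\max_{\overline{\Omega}}\ue>\max_{\partial\Omega}\ue$ at some $x_0\in\Omega$. Then $M\ge\min_{\partial\Omega}\ue\ge\beta-\varepsilon>\max h$. Consider the open set $U=\{x\in\Omega:\ue(x)>\max h\}$. On $U$, (f2) gives $f(x,\ue)>f(x,h(x))=0$, so
\[
-\nabla\cdot(D\nabla\ue)=-\lambda f(x,\ue)<0,
\]
which means $\ue$ is a strict subsolution of the operator $L u=-\nabla\cdot(D\nabla u)$ in $U$. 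By the weak maximum principle for $L$ (which has no zeroth-order term and has $D\in\mathrm{C}^{1,\tau}$ positive), $\max_{\overline U}\ue=\max_{\partial U}\ue$. Now $\partial U\subset\partial\Omega\cup\{\ue=\max h\}$. The values on the second part equal $\max h<M$, so the maximum over $\overline U$ is attained on $\partial\Omega$. Since $x_0\in U$, this contradicts $M>\max_{\partial\Omega}\ue$. Alternatively, at an interior maximum one has $\nabla\ue=0$ and $D\Delta\ue\le0$, so $-\nabla\cdot(D\nabla\ue)(x_0)\ge0$ while $-\lambda f(x_0,\ue(x_0))<0$, which is a direct contradiction. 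This pointwise argument is even simpler, and it shows that the collar estimate is needed only to guarantee $M>\max h$.

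Case (ii) is identical with $\beta$ replaced by $\int_\Omega wh$. The hypothesis $w\in\mathrm{L}^1$ ensures that $\boldsymbol{\mathsf{B}}[h]$ is finite, and the same argument gives $\max_{\overline{\Omega}}\ue=\max_{\partial\Omega}\ue$ (respectively the minimum statement) for $\lambda>\bar\lambda$.
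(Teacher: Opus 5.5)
Your proposal is correct and is essentially the paper's proof. At an interior maximum, $\nabla\cdot(D\nabla\ue)\le0$ forces $f(x_0,\ue(x_0))\le0$, hence $\ue(x_0)\le h(x_0)\le\max_{\overline{\Omega}}h$. Meanwhile the boundary asymptotics in \eqref{m-ax}, together with $g\ge0$, keep $\ue$ above $\max_{\overline{\Omega}}h$ on $\partial\Omega$ for large $\lambda$; the paper phrases this as a contradiction along a sequence $\lambda_k\to\infty$ rather than with an explicit $\varepsilon$. One correction to your remark: it is the boundary estimate, not the collar estimate on $\overline{\Omega_\delta}$, that guarantees $M>\max_{\overline{\Omega}}h$, so the interior estimate and the auxiliary set $U$ can be dropped.
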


The proof of Theorem~\ref{m-thm} consists of several steps. In Section~\ref{sec-vt} we establish the refined estimates for $\ve$ with respect to $\lambda\gg1$ ({\color{black} see} Proposition~\ref{prop-v}), which plays a crucial role in the property of $\boldsymbol{\mathsf{T}}_{\lambda}$. When both $\mathfrak{B}:\mathbb{R}^{m+1}\to\mathbb{R}$ and $\Phi:\mathbb{R}\to\mathbb{R}$ are globally Lipschitz continuous (see {\color{black} (b1)} in Section~\ref{sec-os}), we will demonstrate that, for sufficiently large $\lambda>0$, the mapping~$\boldsymbol{\mathsf{T}}_{\lambda}:\mathbb{R}\to\mathbb{R}$ possesses a unique fixed point, leading to the uniqueness of \eqref{equ}--\eqref{bdu} with the corresponding $\lambda$. However, when {\color{black} (b2)} or {\color{black} (b3)} is satisfied, we can establish only the existence of at least one fixed point for $\boldsymbol{\mathsf{T}}_{\lambda}:\mathbb{R}\to\mathbb{R}$ with sufficiently large $\lambda$; see Proposition~\ref{prop-t}(ii) for details. The proof of Theorem~\ref{m-thm} and Corollary~\ref{cor1} will be completed in Section~\ref{mp-sec}.

\subsection{\bf Estimates of $\boldsymbol{\ve}$ and $\boldsymbol{\mathsf{T}}_{\lambda}$}\label{sec-vt}
Recall that under (f1) and (f2), equation~\eqref{eqv} has a unique solution~$\ve\in\mathrm{C}^{2,\tau}(\overline{\Omega};\mathbb{R})$. We first establish the following estimates for $\ve$.
\begin{proposition}\label{prop-v}
For $\lambda>0$ and $\mu\in\mathbb{R}$, let $\ve\in\mathrm{C}^{2,\tau}(\overline{\Omega};\mathbb{R})$ be the unique solution of \eqref{eqv}. Under the same hypotheses as in Theorem~\ref{m-thm}, there exist positive constants $\mathcal{C}_0$, $\mathcal{M}_0$, $\mathcal{M}_1$  and $\mathcal{M}_2$ independent of $\lambda$ such that as $\lambda>\mathcal{C}_0$,
\begin{equation}\label{esh}
     \max_{x\in\overline{\Omega_{\delta}}}|v_{\lambda,\mu}(x)-h(x)|\leq\mathcal{M}_0\left(\frac{1}{\lambda}+\left(\max_{\partial\Omega}(|g|+|h|)+|\mu|\right)\exp\left(-\sqrt{\lambda}\mathcal{M}_1\delta\right)\right),
\end{equation}
and for $\mu\geq\widetilde{\mu}$,
\begin{equation}\label{v12}
    0\leq\,v_{\lambda,\mu}(x)-v_{\lambda,\widetilde{\mu}}(x)\leq(\mu-\widetilde{\mu})\exp\left(-\sqrt{\lambda}\mathcal{M}_2\mathrm{dist}(x,\partial\Omega)\right),\quad\,x\in\overline{\Omega},
\end{equation}
where $\Omega_{\delta}$ is defined in Theorem~\ref{m-thm}.
\end{proposition}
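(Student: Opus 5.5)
The plan is to use comparison arguments built on the monotonicity in (f2), together with exponential barrier functions adapted to the distance to $\partial\Omega$. Set $z:=\ve-h$. Since $f(x,h(x))=0$ and $\partial_sf\geq\theta_0$, the mean value theorem gives $f(x,\ve)=c_\lambda(x)z$ with $c_\lambda(x):=\int_0^1\partial_sf(x,h+tz)\,\dss\geq\theta_0$. Subtracting $-\nabla\cdot(D\nabla h)$ from both sides of \eqref{eqv}, $z$ then satisfies the linear problem
\begin{equation*}
-\nabla\cdot(D\nabla z)+\lambda c_\lambda(x)z=\nabla\cdot(D\nabla h)=:F(x)\ \text{ in }\Omega,\qquad z=g-h+\mu\ \text{ on }\partial\Omega.
\end{equation*}
Here $F\in \mathrm{C}^{0,\tau}(\overline\Omega)$ is independent of $\lambda$ and $\mu$, since $h\in\mathrm{C}^{2,\tau}$ and $D\in\mathrm{C}^{1,\tau}$. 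By linearity, split $z=z_1+z_2$, where $z_1$ carries the source $F$ with zero boundary data and $z_2$ solves the homogeneous equation with boundary data $g-h+\mu$.

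\emph{Bound for $z_1$.} The constant $\|F\|_\infty/(\lambda\theta_0)$ is a supersolution and its negative is a subsolution, because $c_\lambda\geq\theta_0$. The maximum principle then gives $|z_1|\leq \|F\|_\infty/(\theta_0\lambda)$ on all of $\overline\Omega$, which produces the $1/\lambda$ term in \eqref{esh}.

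\emph{Bound for $z_2$.} Set $K:=\max_{\partial\Omega}(|g|+|h|)+|\mu|$. I will build a barrier $\psi(x)=K\exp(-\sqrt\lambda\,\mathcal M_1\rho(x))$, where $\rho$ is a smooth function agreeing with $\mathrm{dist}(x,\partial\Omega)$ in a tubular neighborhood $\{\mathrm{dist}<\delta_0\}$ (smooth boundary) and comparable to it elsewhere. Then
\begin{equation*}
-\nabla\cdot(D\nabla\psi)=\psi\left(-\lambda\mathcal M_1^2D|\nabla\rho|^2+\sqrt\lambda\mathcal M_1(\nabla D\cdot\nabla\rho+D\Delta\rho)\right).
\end{equation*}
Choosing $\mathcal M_1$ small (e.g.\ $\mathcal M_1^2\max D\,\max|\nabla\rho|^2\leq\theta_0/4$) and $\lambda>\mathcal C_0$ large enough that the $\sqrt\lambda$ term is at most $\lambda\theta_0/4$, we get $-\nabla\cdot(D\nabla\psi)+\lambda c_\lambda\psi\geq\lambda\theta_0\psi/2>0$. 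Since $\psi=K\geq|z_2|$ on $\partial\Omega$, comparison yields $|z_2|\leq\psi$. On $\overline{\Omega_\delta}$ we have $\rho\gtrsim\delta$, which gives \eqref{esh} after adjusting $\mathcal M_1$ and $\mathcal M_0$.

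\emph{Proof of \eqref{v12}.} Let $W:=\ve-v_{\lambda,\widetilde\mu}$. Again by the mean value theorem, $-\nabla\cdot(D\nabla W)+\lambda\tilde c(x)W=0$ with $\tilde c\geq\theta_0$, and $W=\mu-\widetilde\mu\geq0$ on $\partial\Omega$. The weak maximum principle gives $W\geq0$. The barrier $(\mu-\widetilde\mu)\exp(-\sqrt\lambda\mathcal M_2\rho)$, built exactly as above, gives the upper bound. To state it with $\mathrm{dist}(x,\partial\Omega)$ itself, I will choose $\rho\geq c\,\mathrm{dist}$ and absorb $c$ into $\mathcal M_2$.

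The main obstacle is the barrier construction. The distance function is smooth only near $\partial\Omega$, so I will mollify or truncate it in the interior: take $\rho=\chi(\mathrm{dist})$ with $\chi$ concave, increasing, and constant beyond $\delta_0$. On the region where $\rho$ is constant, the barrier is a positive constant, and the term $\lambda c_\lambda\psi$ alone keeps it a supersolution. On the transition region, $|\nabla\rho|$ and $|\Delta\rho|$ are bounded independently of $\lambda$, so the $\lambda$ versus $\sqrt\lambda$ absorption still works. One remaining point is that the constant exponent $\mathcal M_1\delta$ must be compatible with $\rho$ saturating at $\delta_0$. For $\delta>\delta_0$ the bound $\exp(-\sqrt\lambda\mathcal M_1\chi(\delta_0))$ is still of the required form once $\mathcal M_1$ is shrunk by the factor $\delta_0/\mathrm{diam}(\Omega)$, and I will use this rescaling to handle it.
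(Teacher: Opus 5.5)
Your proof is correct, but it takes a different route from the paper's.

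\textbf{How the paper argues.} For \eqref{esh} the paper never linearizes. It works with $V^2$, where $V=\ve-h$, and uses only the one-sided inequality $s\,[f(x,s+h(x))-f(x,h(x))]\geq\theta_0 s^2$ together with Cauchy--Schwarz. This yields $\nabla\cdot\bigl(D\nabla(V^2-\frac{M_0^2}{\lambda^2\theta_0^2})\bigr)\geq\lambda\theta_0\bigl(V^2-\frac{M_0^2}{\lambda^2\theta_0^2}\bigr)$. So the source $\nabla\cdot(D\nabla h)$ and the sign of $V$ are handled by one inequality, at the cost of a square root at the end, which halves the exponent. The paper then proceeds in three stages:
\begin{enumerate}
\item a global maximum-principle bound by $\max_{\partial\Omega}(\mu+g-h)^2$;
\item an exponential barrier only on the strip $\Omega\setminus\overline{\Omega_{\delta^*}}$, built from the distance $\mathsf{d}^*$ to \emph{both} components of the strip's boundary;
\item one more maximum principle on $\Omega_{\delta^{**}}$, followed by the same $\delta^{**}/\mathrm{diam}(\Omega)$ rescaling you use.
\end{enumerate}
The mean value theorem enters only for \eqref{v12}.

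\textbf{What your route buys.} You linearize and superpose, using a constant barrier for the source and a single barrier $K\exp(-\sqrt\lambda\,\mathcal{M}_1\rho)$ with $\rho=\chi(\mathrm{dist}(\cdot,\partial\Omega))\in\mathrm{C}^2(\overline\Omega)$. This is shorter and treats \eqref{esh} and \eqref{v12} with literally the same barrier. It also needs no separate interior step.

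It further avoids a soft spot in the paper. There $\mathsf{d}^*=\min\{\mathrm{d},\delta^*-\mathrm{d}\}$ has a ridge on $\{\mathrm{d}=\delta^*/2\}$. On that ridge $\widetilde V$ is the maximum of two exponentials, hence not a supersolution. So \eqref{mkv}--\eqref{wud} are not justified across that surface as written; taking the sum of the two exponentials instead would repair it.

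Your choice of small $\mathcal{M}_1,\mathcal{M}_2$ is also the right direction. The paper's phrase ``$\mathcal{M}_2$ sufficiently large'' can only be meant as ``sufficiently small'', since a larger $\mathcal{M}_2$ would claim faster decay than the equation allows.

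\textbf{One minor technical point.} Splitting $z=z_1+z_2$ requires solving two Dirichlet problems with coefficient $c_\lambda$. Hypotheses (f1)--(f2) assume no joint continuity of $\partial f/\partial s$, not even local boundedness in $x$. So classical solvability of those linear problems is not automatic. You do not need the splitting. Compare $z=\ve-h\in\mathrm{C}^{2,\tau}(\overline\Omega)$ directly with $\pm\bigl(\max_{\overline\Omega}|F|/(\theta_0\lambda)+\psi\bigr)$, exactly as you already do for $W$ in \eqref{v12}. At an interior negative minimum $x_0$ of the difference, the only fact the maximum-principle argument uses is the pointwise bound $c_\lambda(x_0)\geq\theta_0>0$.
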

\begin{remark}\label{rk34}
Note that $\mu$ corresponds to the nonlocal term in \eqref{bdu} and depends on $\lambda$. It is interesting to observe that for the case $\delta=\lambda^{-\alpha}$ with $\alpha\in(0,\frac12)$, if $\mu$ satisfies 
\begin{equation*}
|\mu|\xrightarrow{\lambda\to\infty}\infty\quad\text{and}\quad|\mu|\exp\left(-\lambda^{\frac12-\alpha}\mathcal{M}_1\right)\xrightarrow{\lambda\to\infty}0,   
\end{equation*}
then by the boundary condition of $\ve$ and \eqref{esh}, we have
\begin{equation*}
\min_{\partial\Omega}|\ve|\xrightarrow{\lambda\to\infty}\infty\quad\text{and}\quad\max_{\overline{\Omega_{\lambda^{-\alpha}}}}|v_{\lambda,\mu}-h|\xrightarrow{\lambda\to\infty}0.
\end{equation*}
From this viewpoint, it is conjectured that when $\mathfrak{B}:\mathbb{R}^{m+1}\to\mathbb{R}$ and $\Phi:\mathbb{R}\to\mathbb{R}$ are locally Lipschitz continuous ({\color{black} see} {\color{black} (b2)}), the solutions to equation~\eqref{equ}--\eqref{bdu} may exhibit asymptotic blow-up behavior in a narrow region near the boundary $\partial\Omega$ as $\lambda$ tends to infinity. 
\end{remark}
\begin{proof}[Proof of Proposition~\ref{prop-v}]
By (f2), applying the standard comparison theorem to \eqref{eqv} implies
\begin{equation}\label{max-p}
    \min\left\{\min_{\partial\Omega}g+\mu,\min_{\overline{\Omega}}h\right\}\leq\ve(x)\leq\max\left\{\max_{\partial\Omega}g+\mu,\max_{\overline{\Omega}}h\right\},\quad\,x\in\overline{\Omega}.
\end{equation}
To prove \eqref{esh}, we set
\begin{equation*}
    V(x):=v_{\lambda,\mu}(x)-h(x),
\end{equation*}
where the subscript of $V_{\lambda,\mu}$ is dropped for a sake of simplicity. By \eqref{fh}, \eqref{eqv} and \eqref{max-p}, one may check that
\begin{equation}\label{VV}
    \begin{aligned}   &\,\,\nabla\cdot\left(D(x)\nabla\left(V^2(x)-\frac{M_0^2}{\lambda^2\theta_0^2}\right)\right)=2\nabla\cdot\left(D(x)V(x)\nabla V(x)\right)\\ &\qquad\geq2V(x)\nabla\cdot(D(x)\nabla{V(x)})\\    &\qquad=2V(x)\left[\lambda\left(f(x,V(x)+h(x))-f(x,h(x))\right)-\nabla\cdot(D(x)\nabla{h(x)})\right]\\ &\qquad\geq2\lambda\theta_0V^2(x)-2V(x)\nabla\cdot\left(D(x)\nabla{h(x)}\right)\\
&\qquad\geq2\lambda\theta_0V^2(x)- 2M_0V(x)\\
&\qquad\geq\lambda\theta_0\left(V^2(x)-\frac{M_0^2}{\lambda^2\theta_0^2}\right)\qquad\text{in}\,\,\Omega,
    \end{aligned}
\end{equation}
where $M_0:=\max_{\overline{\Omega}}|\nabla\cdot(D\nabla{h})|$ and we have used (f2) to obtain 
\begin{equation*}
 s\left[f(x,s+h(x))-f(x,h(x))\right]\geq\theta_0s^2,\quad\forall\,s\in\mathbb{R},   
\end{equation*}
 which implies the forth line of \eqref{VV}. The last estimate of \eqref{VV} is obtained by the Cauchy–Schwarz inequality. Note also that $\max_{\partial\Omega}\left(V^2(x)-\frac{M_0^2}{\lambda^2\theta_0^2}\right)\leq\max_{\partial\Omega}(\mu+g-h)^2$. Hence, by applying the maximum principle to \eqref{VV}, we have
\begin{equation}\label{e-bd}
 \max_{\overline{\Omega}}\left(V^2(x)-\frac{M_0^2}{\lambda^2\theta_0^2}\right)\leq\max_{\partial\Omega}(\mu+g-h)^2.  
\end{equation}

To prove \eqref{esh}, we shall establish a refined estimate for $V^2(x)-\frac{M_0^2}{\lambda^2\theta_0^2}$ with respect to $\lambda$. Before deriving a differential inequality based on the form of \eqref{VV}, we introduce a key concept for a rigorous argument. {\color{black}  Due to the smoothness of the boundary $\partial\Omega$, we can choose a sufficiently small $\delta^*$ such that the region $\Omega\setminus\overline{\Omega_{\delta^*}}:=\{x\in\Omega\,:\,\text{d}(x):=\text{dist}(x,\partial\Omega)<\delta^*\}$ (see \eqref{i-O})   is free of focal points and the distance function $\text{d}$ is twice continuously differentiable on its closure, i.e.,} $\text{d}\in\text{C}^2(\overline{\Omega}\setminus{\Omega_{\delta^*}})$~({\color{black} see} \cite{CR2015}).
Hence, we first deal with the estimate of $V^2(x)-\frac{M_0^2}{\lambda^2\theta_0^2}$ in the subdomain $\Omega\setminus\overline{\Omega_{\delta^*}}$ of $\Omega$. By \eqref{VV}--\eqref{e-bd} we define an auxiliary function 
\begin{equation}\label{w-V}
    \widetilde{V}(x)=\max_{\partial\Omega}(\mu+g-h)^2\exp\left({-K_0\sqrt{\lambda}\mathsf{d}^*(x)}\right),\quad\,x\in\overline{\Omega}\setminus{\Omega_{\delta^*}},
\end{equation}
where 
\begin{equation*}
    \mathsf{d}^*(x):=\text{dist}\left(x,\partial(\Omega\setminus\overline{\Omega_{\delta^*}})\right)
\end{equation*}
and $K_0$ is a positive constant to be determined later.

Keeping $|\nabla\mathsf{d}^*|=1$ in mind and making appropriate manipulations, we have
\begin{equation}\label{mkv}
  \begin{aligned}   &\nabla\cdot\left(D(x)\nabla\widetilde{V}(x)\right) \\
&\,=\left[\lambda{K_0^2D(x)}-\sqrt{\lambda}K_0\nabla\cdot(D(x)\nabla\mathsf{d}^*(x))\right]\widetilde{V}(x)\\   &\,\leq{M_1}\left(\lambda{K_0^2}+\sqrt{\lambda}K_0\right)\widetilde{V}(x)\quad\text{in}\,\,\Omega\setminus\overline{\Omega_{\delta^*}},
\end{aligned}  
\end{equation}
where 
\begin{equation}\label{m11}
  M_1=\max\left\{\max_{\overline{\Omega}\setminus{\Omega_{\delta^*}}}D,\max_{\overline{\Omega}\setminus{\Omega_{\delta^*}}}\left|\nabla\cdot(D\nabla\mathsf{d}^*)\right|\right\}  
\end{equation}
 is finite since $\Omega$ is a bounded smooth domain in $\mathbb{R}^N$.

Comparing the right-hand sides of \eqref{VV} with \eqref{mkv}, we shall determine a positive constant~$K_0$ such that as $\lambda>0$ is sufficiently large, there holds
\begin{equation}\label{1028}
  M_1\left(\lambda{K_0^2}+\sqrt{\lambda}K_0\right)\leq\lambda\theta_0.  
\end{equation}
 To this end, one may set
\begin{equation}\label{k024}
 K_0=\sqrt{\frac{\theta_0}{2M_1}}.
\end{equation}
This along with \eqref{1028} gives a lower bound~  $\lambda\geq\left(\frac{M_1K_0}{\theta_0-M_1K_0^2}\right)^2=\frac{2M_1}{\theta_0}$. As a consequence, by  \eqref{VV}--\eqref{e-bd} and \eqref{mkv}, we verify 
 \begin{align}\label{wud}
 \begin{cases}
   \ds\nabla\cdot\left(D(x)\nabla\left(V^2(x)-\frac{M_0^2}{\lambda^2\theta_0^2}-\widetilde{V}(x)\right)\right)\\
\ds\qquad\qquad\qquad\geq  \lambda\theta_0\left(V^2(x)-\frac{M_0^2}{\lambda^2\theta_0^2}-\widetilde{V}(x)\right)\quad\text{in}\,\,\Omega\setminus\overline{\Omega_{\delta^*}},\vspace{3pt}\\
  \ds\, V^2(x)-\frac{M_0^2}{\lambda^2\theta_0^2}-\widetilde{V}(x)\leq0\quad\text{on}\,\,\partial(\Omega\setminus\overline{\Omega_{\delta^*}}).
 \end{cases}
 \end{align}
Applying the maximum principle to \eqref{wud}, we arrive at
\begin{equation*}
 V^2(x)-\frac{M_0^2}{\lambda^2\theta_0^2}\leq\widetilde{V}(x)\,\,\text{in}\,\, \overline{\Omega}\setminus{\Omega_{\delta^*}}, \,\,\text{as}\,\,\lambda\geq\frac{2M_1}{\theta_0}.  
\end{equation*}
Furthermore, we can choose  $\delta^{**}\in(0,\frac{\delta^*}{2})$ such that $\mathsf{d}^*$ verifies
\begin{equation*}
\mathsf{d}^*(x)=\text{dist}(x,\partial\Omega),\quad\forall\,x\in\overline{\Omega}\setminus\Omega_{\delta^{**}}.
\end{equation*}
This implies
\begin{equation}\label{2024}
\begin{aligned}
&\max_{\overline{\Omega}\setminus{\Omega_{\delta^{**}}}}\left( V^2(x)-\frac{M_0^2}{\lambda^2\theta_0^2}\right)\\
&\quad\leq\max_{\partial\Omega}(\mu+g-h)^2\exp\left({-K_0\sqrt{\lambda}\mathsf{d}^*(x)}\right)\,\,\text{as}\,\,\lambda\geq\frac{2M_1}{\theta_0}. 
\end{aligned} 
\end{equation}

We now need to estimate $\max_{x\in\overline{\Omega_{\delta^{**}}}}\left(V^2(x)-\frac{M_0^2}{\lambda^2\theta_0^2}\right)$ for  $\lambda\geq\frac{2M_1}{\theta_0}$  sufficiently large. Notice first that  
\begin{equation*}
\overline{\Omega_{{\delta^*}}}\subset\overline{\Omega_{\delta^{**}}}\quad\text{and}\quad \partial\Omega_{\delta^{**}}\subset\overline{\Omega}\setminus{\Omega_{\delta^*}}\,\,\text{(by \eqref{i-O})}.  
\end{equation*}
 Thus, by \eqref{VV}, \eqref{w-V}, \eqref{k024} and \eqref{2024}, one may use the maximum principle to obtain 
\begin{equation}\label{d*}
\begin{aligned}   &\max_{x\in\overline{\Omega_{\delta^{**}}}}\left(V^2(x)-\frac{M_0^2}{\lambda^2\theta_0^2}\right)=\max_{x\in\partial{\Omega_{\delta^{**}}}}\left(V^2(x)-\frac{M_0^2}{\lambda^2\theta_0^2}\right)\\
\leq&\,\max_{x\in\partial{\Omega_{\delta^{**}}}}\widetilde{V}(x)=\max_{\partial\Omega}(\mu+g-h)^2\exp\left({-\delta^{**}\sqrt{\frac{\theta_0\lambda}{2M_1}}}\right)\\
\leq&\,\max_{\partial\Omega}(\mu+g-h)^2\exp\left({-\frac{\delta^{**}\text{dist}(x,\partial\Omega)}{\text{diam}(\Omega)}\sqrt{\frac{\theta_0\lambda}{2M_1}}}\right),\qquad\,\text{as}\,\,\lambda\geq\frac{2M_1}{\theta_0},
\end{aligned}    
\end{equation}
where $\text{diam}(\Omega)<\infty$ stands for the diameter of the bounded domain $\Omega$. As a consequence, by combining \eqref{w-V} with \eqref{2024}--\eqref{d*}, we know that there exist $\mathcal{C}_0>\frac{2M_1}{\theta_0}$ and positive constants $\mathcal{M}_0$ and $\mathcal{M}_1$ independent of $\lambda$ such that as $\lambda>\mathcal{C}_0$,
\begin{equation}\label{te1}
\begin{aligned}
   &|v_{\lambda,\mu}(x)-h(x)|=|V(x)|\\ \leq&\,\mathcal{M}_0\left(\frac{1}{\lambda}+\left(\max_{\partial\Omega}(|g|+|h|)+|\mu|\right)\exp\left(-\sqrt{\lambda}\mathcal{M}_1\text{dist}(x,\partial\Omega)\right)\right),\quad\forall\,x\in\overline{\Omega}.    
\end{aligned}
\end{equation}
 Therefore, \eqref{esh} follows from  \eqref{te1}.
 
 It remains to prove \eqref{v12}. By \eqref{eqv}, one may check that, for $\mu\geq\widetilde{\mu}$,
 \begin{equation}\label{vmm}
    \begin{aligned}     &\nabla\cdot\left(D(x)\nabla\left(\ve(x)-v_{\lambda,\widetilde{\mu}}(x)\right)\right)\\
   &\qquad ={\lambda}\left(f(x,\ve(x))-f(x,v_{\lambda,\widetilde{\mu}}(x))\right)\\
     &\qquad ={\lambda}\frac{\partial{f}}{\partial{s}}(x,\underline{v}_{\lambda,\mu,\widetilde{\mu}}(x))\left(\ve(x)-v_{\lambda,\widetilde{\mu}}(x)\right)\quad\text{in}\,\,\Omega,
 \end{aligned}  
 \end{equation}
 and
 \begin{equation}
  \ve(x)-v_{\lambda,\widetilde{\mu}}(x)=\mu-\widetilde{\mu}\geq0\quad\text{on}\quad \partial\Omega, \label{BdV}  
 \end{equation}
 where for each $x\in\Omega$, $\underline{v}_{\lambda,\mu,\widetilde{\mu}}(x)$ lies between $\ve(x)$ and $v_{\lambda,\widetilde{\mu}}(x)$. As a consequence, (f2) and the maximum principle imply $\ve(x)-v_{\lambda,\widetilde{\mu}}(x)\geq0$ in $\overline{\Omega}$. Moreover, since $\frac{\partial{f}}{\partial{s}}(x,\underline{v}_{\lambda,\mu,\widetilde{\mu}}(x))\geq\theta_0>0$, by \eqref{vmm} we have
 \begin{equation}  \nabla\cdot\left(D(x)\nabla\left(\ve(x)-v_{\lambda,\widetilde{\mu}}(x)\right)\right)\geq{\lambda}\theta_0\left(\ve(x)-v_{\lambda,\widetilde{\mu}}(x)\right),\quad\,x\in\Omega.\label{EqV}
 \end{equation}

By \eqref{BdV} and \eqref{EqV}, we can follow the same argument as \eqref{VV}--\eqref{te1} to obtain that if $\mathcal{M}_2>\sqrt{\frac{\theta_0}{2M_1}}$ is sufficiently large, then when $\lambda>\mathcal{C}_0$ for some $\mathcal{C}_0>\frac{2M_1}{\theta_0}$, we have 
\begin{equation*}
 0\leq \ve(x)-v_{\lambda,\widetilde{\mu}}(x)\leq(\mu-\widetilde{\mu})\exp\left(-\mathcal{M}_2\sqrt{\lambda}\text{dist}(x,\partial\Omega)\right)\quad\text{in}\,\,\overline{\Omega},\quad\text{as}\,\,\lambda>\mathcal{C}_0, 
\end{equation*}
where $M_1$ is defined by \eqref{m11}. (For the sake of simplicity, we use the same symbol $\mathcal{C}_0$.) Therefore, we arrive at \eqref{v12} and complete the proof of Proposition~\ref{prop-v}.
\end{proof}
\begin{remark}
When $h(x)\equiv{h_0}$ is a constant-valued function, by the argument of \eqref{w-V}--\eqref{te1} we have that
\begin{equation*}
    | v_{\lambda,\mu}(x)-h_0|\leq\max_{\partial\Omega}|\mu+g-h_0|\exp\left({-\mathcal{M}_2\sqrt{\lambda}\mathrm{dist}(x,\partial\Omega)}\right)\quad\text{in}\quad\overline{\Omega},
\end{equation*}
 as $\lambda>\mathcal{C}_0$.

\end{remark}
By Proposition~\ref{prop-v}, we can verify that the mapping $\boldsymbol{\mathsf{T}}_{\lambda}:\mathbb{R}\to\mathbb{R}$ (see \eqref{map-t}) with sufficiently large $\lambda$ is a contraction. We state the result as follows.
\begin{proposition}\label{prop-t}
For $\lambda>0$, let $\boldsymbol{\mathsf{T}}_{\lambda}$ be defined by \eqref{map-t}. Under the same hypotheses as in Theorem~\ref{m-thm} and Proposition~\ref{prop-v}, we {\color{black}have} that: 
\begin{itemize}
    \item[(i)] If both $\mathfrak{B}:\mathbb{R}^{m+1}\to\mathbb{R}$ and $\Phi:\mathbb{R}\to\mathbb{R}$ are globally Lipschitz continuous (see {\color{black} (b1)}), then there exists a positive constant  $\lambda_0>\mathcal{C}_0$  depending mainly on $\ell$ and $\mathfrak{m}(\boldsymbol{\xi})$ such that for each  $\lambda>\lambda_0$, the mapping $\boldsymbol{\mathsf{T}}_{\lambda}:\mathbb{R}\to\mathbb{R}$ has a unique fixed point $\mu=\mu(\lambda)$  depending on $\lambda$.
      \item[(ii)] If at least one of  $\mathfrak{B}:\mathbb{R}^{m+1}\to\mathbb{R}$ or $\Phi:\mathbb{R}\to\mathbb{R}$ is only locally Lipschitz continuous (see {\color{black} (b2)} and {\color{black} (b3)}), then for each $I_{\Lambda}:=[-\Lambda,\Lambda]$ with   
\begin{equation}\label{1611}
\Lambda>\left|\mathfrak{B}\big(h(\xi_1),...,h(\xi_m),\int_{\Omega}w(y)\Phi(h(y))\text{d}y\big)\right|, 
\end{equation}
      there exists a positive constant  $\lambda_0^*=\lambda_0^*(\Lambda)>\mathcal{C}_0$  depending mainly on $\Lambda$ and $\mathfrak{m}(\boldsymbol{\xi})$ such that for each $\lambda>\lambda_0^*$, the mapping $\boldsymbol{\mathsf{T}}_{\lambda}:I_{\Lambda}\to I_{\Lambda}$ has a unique fixed point $\mu=\mu(\lambda,\Lambda)$ depending on $\lambda$ and $\Lambda$. As a consequence, $\boldsymbol{\mathsf{T}}_{\lambda}:\mathbb{R}\to\mathbb{R}$ has at least one fixed point.
\end{itemize}
\end{proposition}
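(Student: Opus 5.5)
The plan is to show that $\boldsymbol{\mathsf{T}}_{\lambda}$ is a contraction, on $\mathbb{R}$ in case (i) and on a self-mapped interval $I_{\Lambda}$ in case (ii), and then apply the Banach fixed point theorem. Both parts rest on the two estimates of Proposition~\ref{prop-v}: \eqref{v12} controls the Lipschitz dependence of $\ve$ on $\mu$, and \eqref{esh} controls how close $\ve$ is to $h$.

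\textbf{Lipschitz estimate for $\mu\mapsto\ve$.} For $\mu\ge\widetilde\mu$, \eqref{v12} evaluated at $x=\xi_j$ gives
\[
|\ve(\xi_j)-v_{\lambda,\widetilde\mu}(\xi_j)|\le|\mu-\widetilde\mu|\,e^{-\sqrt\lambda\mathcal{M}_2\mathfrak{m}(\boldsymbol{\xi})}.
\]
For the integral term, $|\Phi(\ve)-\Phi(v_{\lambda,\widetilde\mu})|\le\ell|\mu-\widetilde\mu|e^{-\sqrt\lambda\mathcal{M}_2\mathrm{dist}(y,\partial\Omega)}$, so
\[
\Big|\int_\Omega w\big(\Phi(\ve)-\Phi(v_{\lambda,\widetilde\mu})\big)\Big|\le\ell|\mu-\widetilde\mu|\,\epsilon(\lambda),\qquad \epsilon(\lambda):=\int_\Omega|w(y)|e^{-\sqrt\lambda\mathcal{M}_2\mathrm{dist}(y,\partial\Omega)}\,dy .
\]
Since $w\in L^1$, dominated convergence gives $\epsilon(\lambda)\to0$. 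This is the main subtlety: the factor is not exponentially small uniformly, and its rate depends on $w$. In case (i), \eqref{lip-b} then yields
\[
|\boldsymbol{\mathsf{T}}_\lambda(\mu)-\boldsymbol{\mathsf{T}}_\lambda(\widetilde\mu)|\le\ell\big(m e^{-\sqrt\lambda\mathcal{M}_2\mathfrak{m}(\boldsymbol{\xi})}+\ell\,\epsilon(\lambda)\big)|\mu-\widetilde\mu|.
\]
Choose $\lambda_0>\mathcal{C}_0$ so that this coefficient is at most $1/2$. Banach's theorem on the complete space $\mathbb{R}$ then gives a unique fixed point for every $\lambda>\lambda_0$.

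\textbf{Case (ii).} Fix $\Lambda$ satisfying \eqref{1611} and restrict to $\mu\in I_\Lambda$. By \eqref{max-p}, $|\ve|\le R_\Lambda:=\max_{\partial\Omega}|g|+\Lambda+\max|h|$ for all $\mu\in I_\Lambda$ and all $\lambda$. This bounds the arguments fed to $\Phi$. Set $A:=\|w\|_{L^1}\sup_{|s|\le R_\Lambda}|\Phi(s)|$, so the last argument of $\mathfrak{B}$ stays in a fixed box. Let $\ell_\Lambda$ be a common local Lipschitz constant of $\Phi$ on $[-R_\Lambda,R_\Lambda]$ and of $\mathfrak{B}$ on $[-R_\Lambda,R_\Lambda]^m\times[-A,A]$. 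The computation above then shows $\boldsymbol{\mathsf{T}}_\lambda$ is a $\tfrac12$-contraction on $I_\Lambda$ for $\lambda$ large.

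\textbf{Self-mapping of $I_\Lambda$.} This is the main obstacle. Use
\[
|\boldsymbol{\mathsf{T}}_\lambda(\mu)|\le|\boldsymbol{\mathsf{B}}[h]|+|\boldsymbol{\mathsf{T}}_\lambda(\mu)-\boldsymbol{\mathsf{B}}[h]|.
\]
By \eqref{esh} with $\delta=\mathfrak{m}(\boldsymbol{\xi})$, $|\ve(\xi_j)-h(\xi_j)|\le\mathcal{M}_0\big(\lambda^{-1}+C_\Lambda e^{-\sqrt\lambda\mathcal{M}_1\mathfrak{m}(\boldsymbol{\xi})}\big)$ uniformly in $\mu\in I_\Lambda$. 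For the integral, the pointwise bound \eqref{te1} applies to all $x\in\overline\Omega$. Combined with the uniform $L^\infty$ bound $R_\Lambda$ and $w\in L^1$, dominated convergence gives $\int_\Omega|w||\Phi(\ve)-\Phi(h)|\to0$ uniformly in $\mu\in I_\Lambda$. Hence $\boldsymbol{\mathsf{T}}_\lambda(\mu)\to\boldsymbol{\mathsf{B}}[h]$ uniformly on $I_\Lambda$. Since $|\boldsymbol{\mathsf{B}}[h]|<\Lambda$ strictly, $\boldsymbol{\mathsf{T}}_\lambda(I_\Lambda)\subset I_\Lambda$ for $\lambda>\lambda_0^*(\Lambda)$. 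Banach's theorem on the closed interval $I_\Lambda$ gives a unique fixed point there, and hence at least one fixed point of $\boldsymbol{\mathsf{T}}_\lambda$ on $\mathbb{R}$.
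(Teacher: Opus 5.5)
Your proposal is correct and follows essentially the same route as the paper: a contraction estimate from \eqref{v12}, invariance of $I_{\Lambda}$ (the paper's Lemma~\ref{lem-t}) from \eqref{esh} and \eqref{te1}, and then the Banach fixed-point theorem. The differences are cosmetic. You use dominated convergence for the $w$-integral where the paper splits $\Omega$ at $\Omega_{\lambda^{-1/4}}$. Your bound $R_{\Lambda}=\max_{\partial\Omega}|g|+\Lambda+\max_{\overline{\Omega}}|h|$, taken from \eqref{max-p}, is if anything slightly more careful than the paper's choice of $\widetilde{\Lambda}$, because it accounts for $\max_{\overline{\Omega}}|h|$ in the range fed to $\Phi$.
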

\begin{remark}\label{rk37}
When either {\color{black} (b2)} or {\color{black} (b3)} is satisfied, we note that if $\limsup_{\Lambda\to\infty}\lambda^*_0(\Lambda)<\infty$, then by Proposition~\ref{prop-t}(ii), for sufficiently large $\lambda$, $\boldsymbol{\mathsf{T}}_{\lambda}:\mathbb{R}\to\mathbb{R}$ has a unique fixed point. However, if $\limsup_{\Lambda\to\infty}\lambda_0^*(\Lambda)=\infty$, then, for any sufficiently large $\lambda$, $\boldsymbol{\mathsf{T}}_{\lambda}:\mathbb{R}\to\mathbb{R}$ may have at least two fixed points.
\end{remark}
\begin{proof}[Proof of Proposition~\ref{prop-t}]
Firstly, we consider the case {\color{black} (b1)}. Then, by \eqref{mxi}, \eqref{lip-b}, \eqref{map-t}, \eqref{esh}--\eqref{v12} and \eqref{w-V}, one may check that, as $\lambda>\mathcal{C}_0$,
\begin{equation}\label{t-map}
\begin{aligned}  &|\boldsymbol{\mathsf{T}}_{\lambda}(\mu_1)-\boldsymbol{\mathsf{T}}_{\lambda}(\mu_2)|\\
\leq&\,\ell\left(\sum_{j=1}^m|v_{\lambda,\mu_1}(\xi_j)-v_{\lambda,\mu_2}(\xi_j)|+\int_{\Omega}|w(y)||\Phi(v_{\lambda,\mu_1}(y))-\Phi(v_{\lambda,\mu_2}(y)|\text{d}y\right)\\
  \leq&\,|\mu_1-\mu_2|\left(\ell\sum_{j=1}^m\exp\left(-\sqrt{\lambda}\mathcal{M}_2\text{dist}(\xi_j,\partial\Omega)\right)\right.\\
&\qquad\qquad\quad\left.+\ell^2\int_{\Omega}|w(y)|\exp\left(-\sqrt{\lambda}\mathcal{M}_2\text{dist}(y,\partial\Omega)\right)\text{d}y\right)\\
  \leq&\,\mathcal{M}_{\lambda}(\ell)|\mu_1-\mu_2|,
\end{aligned}
\end{equation} 
where
\begin{equation}\label{ml-11}
\begin{aligned}
    \mathcal{M}_{\lambda}(\ell):=&\, m\ell\exp\left(-\sqrt{\lambda}\mathcal{M}_2\mathfrak{m}(\boldsymbol{\xi})\right)\\
   &\,+\ell^2\left[\exp\left(-\lambda^{\frac14}\mathcal{M}_2\right)\int_{\Omega_{\lambda^{-\frac14}}}|w(y)|\text{d}y+\int_{\Omega\setminus\overline{\Omega_{\lambda^{-\frac14}}}}|w(y)|\text{d}y\right].  
\end{aligned}
 \end{equation}

Note that $w\in\text{L}^1(\Omega)$ and the Lebesgue measure of ${\Omega\setminus\overline{\Omega_{\lambda^{-\frac14}}}}$ approaches zero as $\lambda\to\infty$. Thus, we have
\begin{equation}\label{abw}
\lim_{\lambda\to\infty}\int_{\Omega\setminus\overline{\Omega_{\lambda^{-\frac14}}}}|w(y)|\text{d}y=0. 
\end{equation}
Note also that $\mathfrak{m}(\boldsymbol{\xi})$ and $\ell$ are independent of $\lambda$. Thus, we obtain $\lim_{\lambda\to\infty}\mathcal{M}_{\lambda}(\ell)=0$. As a consequence, one may choose $\lambda_0>\mathcal{C}_0$ depending mainly on $\ell$ and $\mathfrak{m}(\boldsymbol{\xi})$ such that $\mathcal{M}_{\lambda}(\ell)\leq\frac12$. Along with \eqref{t-map}, we obtain that  as $\lambda>\lambda_0$, $\boldsymbol{\mathsf{T}}_{\lambda}:\mathbb{R}\to\mathbb{R}$ is a contraction mapping with $|\boldsymbol{\mathsf{T}}_{\lambda}(\mu_1)-\boldsymbol{\mathsf{T}}_{\lambda}(\mu_2)|\leq\frac12|\mu_1-\mu_2|$. Since $\mathbb{R}$ is a complete metric space, for each $\lambda>\lambda_0$, by the Banach fixed-point theorem, there uniquely exists $\mu(\lambda)\in\mathbb{R}$ such that $\boldsymbol{\mathsf{T}}_{\lambda}(\mu(\lambda))=\mu(\lambda)$.

For the cases {\color{black} (b2)} and {\color{black} (b3)}, it suffices to focus on the case {\color{black} (b2)} since the same argument can be directly applied to the case {\color{black} (b3)}. Let us arbitrarily select a positive constant $\Lambda$ and set 
\begin{equation}\label{al-16}
\widetilde{\Lambda}=\max\left\{\max_{\overline{\Omega}}|h|,\int_{\Omega}\left|w(y)\Phi(h(y))\right|\text{d}y,\Lambda_1,\Lambda_2\right\},
\end{equation}
where
\begin{equation*}
  \Lambda_1:= \Lambda+\max_{\partial\Omega}|g|\quad\text{and}\quad  \Lambda_2:=\max_{|s|\leq\Lambda_1}|\Phi(s)|\int_{\Omega}|w(y)|\dy.  
\end{equation*}
 Then there exists $\ell^*(\Lambda)>0$ depending on $\Lambda$ such that
\begin{equation}\label{lip-*}
\begin{cases}
|\mathfrak{B}(s_1,...,s_{m+1})-\mathfrak{B}(t_1,...,t_{m+1})|\leq\ell^*(\Lambda)\sum_{j=1}^{m+1}|s_j-t_j|,\quad\text{if}\,\,\,|s_j|,\,|t_j|\leq\widetilde{\Lambda},\vspace{3pt}\\
|\Phi(s)-\Phi(t)|\leq\ell^*(\Lambda)|s-t|,\quad\text{if}\,\,\,|s|,\,|t|\leq\widetilde{\Lambda}.
\end{cases}   
\end{equation}
Let $I_{\Lambda}=[-\Lambda,\Lambda]$. We have the following property for $\boldsymbol{\mathsf{T}}_{\lambda}$.
\begin{lemma}[\bf Endomorphism]\label{lem-t}
For $\Lambda$ satisfying \eqref{1611}, there exists $\lambda^{**}(\Lambda)>0$ depending on $\Lambda$ such that 
\begin{equation}\label{1116} \boldsymbol{\mathsf{T}}_{\lambda}(I_{\Lambda})\subseteq I_{\Lambda}\quad\text{as}\,\,\,\lambda>\lambda^{**}(\Lambda).   
\end{equation}
\end{lemma}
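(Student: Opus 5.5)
Fix $\Lambda$ satisfying \eqref{1611} and write $\boldsymbol{\mathsf{B}}[h]=\mathfrak{B}\big(h(\xi_1),\dots,h(\xi_m),\int_\Omega w\Phi(h)\,\mathrm{d}y\big)$, so that $\varepsilon_\Lambda:=\Lambda-|\boldsymbol{\mathsf{B}}[h]|>0$. The plan is to show that $\boldsymbol{\mathsf{T}}_\lambda(\mu)$ is uniformly close to $\boldsymbol{\mathsf{B}}[h]$ for all $\mu\in I_\Lambda$ once $\lambda$ is large:
\begin{equation*}
\sup_{\mu\in I_\Lambda}|\boldsymbol{\mathsf{T}}_\lambda(\mu)-\boldsymbol{\mathsf{B}}[h]|<\varepsilon_\Lambda .
\end{equation*}
This gives $|\boldsymbol{\mathsf{T}}_\lambda(\mu)|<\Lambda$, i.e.\ \eqref{1116}.

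\emph{Step 1 (a priori bounds so that the local Lipschitz constant applies).} For $\mu\in I_\Lambda$, the comparison bound \eqref{max-p} gives
\begin{equation*}
|v_{\lambda,\mu}|\le\max\{\max_{\partial\Omega}|g|+\Lambda,\max_{\overline\Omega}|h|\}\le\widetilde\Lambda \quad\text{on } \overline{\Omega},
\end{equation*}
uniformly in $\lambda$. Hence $|v_{\lambda,\mu}(\xi_j)|,|h(\xi_j)|\le\widetilde\Lambda$. For the integral slots, $\int_\Omega|w\Phi(v_{\lambda,\mu})|\le\Lambda_2\le\widetilde\Lambda$ and $\int_\Omega|w\Phi(h)|\le\widetilde\Lambda$, by the very definition \eqref{al-16}. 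So every argument of $\mathfrak{B}$ and $\Phi$ lies in $[-\widetilde\Lambda,\widetilde\Lambda]$ and \eqref{lip-*} applies with constant $\ell^*=\ell^*(\Lambda)$. One caveat: $\Lambda_2$ uses $\max_{|s|\le\Lambda_1}|\Phi|$, while $v$ may reach $\max|h|$. I will simply enlarge the range to $\max\{\Lambda_1,\max|h|\}$ if needed; this is harmless.

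\emph{Step 2 (estimate the difference).} By \eqref{lip-*},
\begin{equation*}
|\boldsymbol{\mathsf{T}}_\lambda(\mu)-\boldsymbol{\mathsf{B}}[h]|\le \ell^*\sum_{j=1}^m|v_{\lambda,\mu}(\xi_j)-h(\xi_j)|+(\ell^*)^2\int_\Omega|w|\,|v_{\lambda,\mu}-h|\,\mathrm{d}y .
\end{equation*}
For the point terms, take $\delta=\mathfrak{m}(\boldsymbol{\xi})$ in \eqref{esh}. Each term is then at most $\mathcal{M}_0\big(\lambda^{-1}+(\max_{\partial\Omega}(|g|+|h|)+\Lambda)e^{-\sqrt\lambda\mathcal{M}_1\mathfrak{m}(\boldsymbol{\xi})}\big)$, which tends to $0$ uniformly for $\mu\in I_\Lambda$.

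For the integral term I split $\Omega=\Omega_{\lambda^{-1/4}}\cup(\Omega\setminus\Omega_{\lambda^{-1/4}})$, as in \eqref{ml-11}. On the interior piece, the pointwise bound \eqref{te1} gives $|v-h|\le\mathcal{M}_0(\lambda^{-1}+C_\Lambda e^{-\mathcal{M}_1\lambda^{1/4}})$, multiplied by $\|w\|_{L^1}$. On the boundary strip, Step 1 gives $|v-h|\le2\widetilde\Lambda$, multiplied by $\int_{\Omega\setminus\Omega_{\lambda^{-1/4}}}|w|$, which tends to $0$ by \eqref{abw}.

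\emph{Step 3 (choose $\lambda$).} All terms vanish as $\lambda\to\infty$, uniformly in $\mu\in I_\Lambda$. Choose $\lambda^{**}(\Lambda)>\mathcal{C}_0$ so that their sum is below $\varepsilon_\Lambda$; this proves the lemma. The main obstacle is the boundary-layer part of the integral term. There $v_{\lambda,\mu}$ does not approach $h$, so only uniform boundedness together with the absolute continuity of $\int|w|$ can be used. This is precisely why the a priori bound \eqref{max-p} and the choice of $\widetilde\Lambda$ in \eqref{al-16} are needed, and why $\lambda^{**}$ depends on $\Lambda$ through $\ell^*(\Lambda)$ and $\widetilde\Lambda$.
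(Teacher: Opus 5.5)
Your proposal is correct and follows the paper's proof. Both compare $\boldsymbol{\mathsf{T}}_\lambda(\mu)$ with $\boldsymbol{\mathsf{B}}[h]$ through the local Lipschitz constant $\ell^*(\Lambda)$ on the box of radius $\widetilde\Lambda$. Both control the point terms by \eqref{esh} and the integral term by splitting at $\Omega_{\lambda^{-1/4}}$, and then conclude from $|\boldsymbol{\mathsf{B}}[h]|<\Lambda$.

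Your version is in fact more careful than the paper's in two places. You bound $|v_{\lambda,\mu}-h|\le 2\widetilde\Lambda$ explicitly on the boundary strip, a factor the paper's $\widetilde{\mathcal{M}}_\lambda$ omits. You also repair the range in $\Lambda_2$ for the case $\max_{\overline\Omega}|h|>\Lambda_1$.
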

\begin{proof} 
Note that, for $|\mu|\leq\Lambda$, there hold
$\max_{\overline{\Omega}}|\ve|\leq|\mu|+\max_{\partial\Omega}|g|\leq\widetilde{\Lambda}$ and
\begin{equation*}
\left|\int_{\Omega}w(y)\Phi(\ve(y))\text{d}y\right|\leq\Lambda_2\leq\widetilde{\Lambda}.   
\end{equation*}
Thus, by \eqref{al-16} and \eqref{lip-*}, we can use the estimate~\eqref{esh} and follow the similar argument as \eqref{t-map} to obtain, for  $\lambda>\mathcal{C}_0$, that:
\begin{equation}\label{add-16}
  \begin{aligned}
 |\boldsymbol{\mathsf{T}}_{\lambda}(\mu)|\leq&\,\left| \mathfrak{B}\big(\ve(\xi_1),...,\ve(\xi_m),\int_{\Omega}w(y)\Phi(\ve(y))\text{d}y\big)\right.\\
   &\left.\qquad\qquad\qquad-\mathfrak{B}\big(h(\xi_1),...,h(\xi_m),\int_{\Omega}w(y)\Phi(h(y))\text{d}y\big)\right|\\ 
&\,+\left|\mathfrak{B}\big(h(\xi_1),...,h(\xi_m),\int_{\Omega}w(y)\Phi(h(y))\text{d}y\big)\right|\\
\leq&\,\ell^*(\Lambda)\left(\sum_{j=1}^m|\ve(\xi_j)-h(\xi_j)|+\ell^*(\Lambda)\int_{\Omega}|w(y)||\ve(y)-h(y)|\text{d}y\right)\\ 
&\,+\left|\mathfrak{B}\big(h(\xi_1),...,h(\xi_m),\int_{\Omega}w(y)\Phi(h(y))\text{d}y\big)\right|\\
\leq&\,\widetilde{\mathcal{M}}_{\lambda}(\ell^*(\Lambda))+\left|\mathfrak{B}\big(h(\xi_1),...,h(\xi_m),\int_{\Omega}w(y)\Phi(h(y))\text{d}y\big)\right|,
\end{aligned}  
\end{equation}
where the upper bound $\widetilde{\mathcal{M}}_{\lambda}(\ell^*(\Lambda))$ is verified by \eqref{mxi} and \eqref{esh} with $|\mu|\leq\Lambda$:
\begin{align*}
\widetilde{\mathcal{M}}_{\lambda}(\ell^*(\Lambda)):=&\,\ell^*(\Lambda)m \mathcal{M}_0\left(\frac{1}{\lambda}+\left(\max_{\partial\Omega}(|g|+|h|)+\Lambda\right)\exp\left(-\sqrt{\lambda}\mathcal{M}_1\mathfrak{m}(\boldsymbol{\xi})\right)\right)   \\
&\,+(\ell^*(\Lambda))^2\left[\exp\left(-\lambda^{\frac14}\mathcal{M}_2\right)\int_{\Omega_{\lambda^{-\frac14}}}|w(y)|\text{d}y+\int_{\Omega\setminus\overline{\Omega_{\lambda^{-\frac14}}}}|w(y)|\text{d}y\right],
\end{align*}
which is similar to \eqref{ml-11}. One observes that for each $\Lambda$ satisfying \eqref{1611}, there hold $\lim_{\lambda\to\infty}\widetilde{\mathcal{M}}_{\lambda}(\ell^*(\Lambda))=0$. Hence, by \eqref{add-16}, there exists $\lambda^{**}(\Lambda)>\mathcal{C}_0$ depending on $\Lambda$ such that as $\lambda>\lambda^{**}(\Lambda)$, \eqref{1116} is satisfied. The proof of Lemma~\ref{lem-t} is thus completed.
\end{proof}

Finally, we want to prove that for each $\Lambda$ satisfying \eqref{1611}, $\boldsymbol{\mathsf{T}}_{\lambda}:I_{\Lambda}\to I_{\Lambda}$ is a contraction mapping provided that $\lambda>\lambda^{**}(\Lambda)$ is sufficiently large. Indeed, by \eqref{lip-*}, one may follow the same argument as \eqref{t-map} to obtain $|\boldsymbol{\mathsf{T}}_{\lambda}(\mu_1)-\boldsymbol{\mathsf{T}}_{\lambda}(\mu_2)|\leq\mathcal{M}^*_{\lambda,\Lambda}|\mu_1-\mu_2|$ for $\mu_1,\,\mu_2\in I_{\Lambda}$. Here $\mathcal{M}^*_{\lambda,\Lambda}$ corresponds to $\mathcal{M}_{\lambda}(\ell)$ in \eqref{ml-11} with $\ell=\ell^*(\Lambda)$ which depends on $\Lambda$. Since for fixed $\Lambda$ we have $\lim_{\lambda\to\infty}\mathcal{M}^*_{\lambda,\Lambda}=0$, there exists $\lambda^*=\lambda^*(\Lambda)>\lambda^{**}(\Lambda)$ depending on $\mathfrak{m}(\boldsymbol{\xi})$ and $\Lambda$ such that $\boldsymbol{\mathsf{T}}_{\lambda}:I_{\Lambda}\to I_{\Lambda}$ admits
\begin{equation*}
    |\boldsymbol{\mathsf{T}}_{\lambda}(\mu_1)-\boldsymbol{\mathsf{T}}_{\lambda}(\mu_2)|\leq\frac12|\mu_1-\mu_2|,\quad\text{for}\,\,|\mu_1|,\,|\mu_2|\leq\Lambda,\quad\text{as}\,\,\lambda>\lambda^*(\Lambda).
\end{equation*}
Therefore,  as $\lambda>\lambda^*(\Lambda)$, $\boldsymbol{\mathsf{T}}_{\lambda}$ has a unique fixed point in $[-\Lambda,\Lambda]$, and the proof of Proposition~\ref{prop-t} is completed.
\end{proof}

\subsection{Proof of Theorem~\ref{m-thm} and Corollary~\ref{cor1}}\label{mp-sec}
Having Propositions~\ref{prop-v} and \ref{prop-t} in hands, we are now in a position to state the proof of Theorem~\ref{m-thm} as follows.
\begin{proof}[Proof of  Theorem~\ref{m-thm}]
For each fixed $\lambda>\lambda_0$, by \eqref{map-t} and Proposition~\ref{prop-t}(i),  there exists $\mu(\lambda)$ such that
\begin{equation*}
\mu(\lambda)=\mathfrak{B}\big(v_{\lambda,\mu(\lambda)}(\xi_1),...,v_{\lambda,\mu(\lambda)}(\xi_m),\int_{\Omega}w(y)\Phi(v_{\lambda,\mu(\lambda)}(y))\text{d}y\big),   
\end{equation*}
where $v_{\lambda,\mu(\lambda)}\in\mathrm{C}^{2,\tau}(\overline{\Omega};\mathbb{R})$ is the unique solution of \eqref{eqv} corresponding to $\mu=\mu(\lambda)$. Thus, $\ue=v_{\lambda,\mu(\lambda)}$ is a solution of \eqref{equ}--\eqref{bdu}. 

Proposition~\ref{prop-t}(i) also implies the uniqueness of the solution $\ue$ to \eqref{equ}--\eqref{bdu}. To be more specific, suppose on the contrary that there is a $\lambda_{\mathfrak{y}}>\lambda_0$ such that \eqref{equ}--\eqref{bdu} with $\lambda=\lambda_{\mathfrak{y}}$ has at least two distinct solutions $u_{\lambda_{\mathfrak{y}},1}$ and $u_{\lambda_{\mathfrak{y}},2}$, i.e.,
\begin{equation}\label{2025-1}
u_{\lambda_{\mathfrak{y}},1}\not\equiv   u_{\lambda_{\mathfrak{y}},2}.
\end{equation}
Setting 
\begin{equation*}
\mu_i(\lambda_{\mathfrak{y}})=\mathfrak{B}\big(u_{\lambda_{\mathfrak{y}},i}(\xi_1),...,u_{\lambda_{\mathfrak{y}},i}(\xi_m),\int_{\Omega}w(y)\Phi(u_{\lambda_{\mathfrak{y}},i}(y))\text{d}y\big),\quad\,i=1,2,
\end{equation*}
 by the uniqueness of \eqref{eqv} there must hold $v_{\lambda_{\mathfrak{y}},\mu_i(\lambda_{\mathfrak{y}})}=u_{\lambda_{\mathfrak{y}},i}$. In particular, this indicates  
 \begin{equation*}
\mu_i(\lambda_{\mathfrak{y}})=\mathfrak{B}\big(v_{\lambda_{\mathfrak{y}},\mu_i(\lambda_{\mathfrak{y}})}(\xi_1),...,v_{\lambda_{\mathfrak{y}},\mu_i(\lambda_{\mathfrak{y}})}(\xi_m),\int_{\Omega}w(y)\Phi(v_{\lambda_{\mathfrak{y}},\mu_i(\lambda_{\mathfrak{y}})}(y))\text{d}y\big)=\mathsf{T}_{\lambda_{\mathfrak{y}}}(\mu_i(\lambda_{\mathfrak{y}})).   
 \end{equation*}
  By Proposition~\ref{prop-t} we have $\mu_1(\lambda_{\mathfrak{y}})=\mu_2(\lambda_{\mathfrak{y}})$ since $\lambda_{\mathfrak{y}}>\lambda_0$. Hence, 
  \begin{equation*} u_{\lambda_{\mathfrak{y}},1}=v_{\lambda_{\mathfrak{y}},\mu_1(\lambda_{\mathfrak{y}})}=v_{\lambda_{\mathfrak{y}},\mu_2(\lambda_{\mathfrak{y}})}=u_{\lambda_{\mathfrak{y}},2},    
  \end{equation*}
 a contradiction to \eqref{2025-1}. Therefore, we complete the proof of uniqueness. 

Now, for $\lambda>\lambda_0>\mathcal{C}_0$,
we have $\ue=v_{\lambda,\mu(\lambda)}$. Then, by \eqref{mxi}, \eqref{lip-b}, \eqref{mubeta} and \eqref{esh} we have
\begin{equation}\label{wab}
    \begin{aligned}
   |\mu(\lambda)-\boldsymbol{\mathsf{B}}[h]|=&\,\left| \mathfrak{B}\big(v_{\lambda,\mu(\lambda)}(\xi_1),...,v_{\lambda,\mu(\lambda)}(\xi_m),\int_{\Omega}w(y)\Phi(v_{\lambda,\mu(\lambda)}(y))\text{d}y\big)\right.\\
   &\left.\qquad\qquad\qquad-\mathfrak{B}\big(h(\xi_1),...,h(\xi_m),\int_{\Omega}w(y)\Phi(h(y))\text{d}y\big)\right|\\ \leq&\,\ell\left(\sum_{j=1}^m|v_{\lambda,\mu(\lambda)}(\xi_j)-h(\xi_j)|+\ell\int_{\Omega}|w(y)||v_{\lambda,\mu(\lambda)}(y)-h(y)|\text{d}y\right)\\ \leq&\,\mathcal{M}_0\ell{\sum_{j=1}^m}\left(\frac{1}{\lambda}+\left(\max_{\partial\Omega}(|g|+|h|)+|\mu(\lambda)|\right)\exp\left(-\sqrt{\lambda}\mathcal{M}_1 \mathfrak{m}(\boldsymbol{\xi})\right)\right)\\
&+\mathcal{M}_0\ell^2\left(\frac{1}{\lambda}+\max_{\partial\Omega}(|g|+|h|)+|\mu(\lambda)|\right)\\
&\quad\times\left(\int_{\Omega\setminus\overline{\Omega_{\lambda^{-\frac14}}}}|w(y)|\text{d}y+\exp\left(-{\lambda}^{\frac14}\mathcal{M}_1\right)\int_{\Omega_{\lambda^{-\frac14}}}|w(y)|\text{d}y \right).
\end{aligned}
\end{equation}
Moreover, let us use the elementary inequality $|\mu(\lambda)|\leq\left| \mu(\lambda)-\boldsymbol{\mathsf{B}}[h]\right|+\left|\boldsymbol{\mathsf{B}}[h]\right|$ to deal with the last estimate of \eqref{wab} and notice
\begin{equation*}
 \exp\left(-\sqrt{\lambda}\mathcal{M}_1 \mathfrak{m}(\boldsymbol{\xi})\right)\xrightarrow{\lambda\to\infty}0\quad\text{and}\quad \int_{\Omega\setminus\overline{\Omega_{\lambda^{-\frac14}}}}|w(y)|\text{d}y\xrightarrow{\lambda\to\infty}0\,\,\,(\text{see\,\,(\ref{abw})}). 
\end{equation*}
After making appropriate manipulations, we obtain $\mu(\lambda)\xrightarrow{\lambda\to\infty}\boldsymbol{\mathsf{B}}[h]$, and by \eqref{esh} with $\ue=v_{\lambda,\mu(\lambda)}$, we have
\begin{equation*}  \max_{\overline{\Omega_{\delta}}}|u_{\lambda}-h|\xrightarrow{\lambda\to\infty}0\quad\text{and}\quad\max_{\partial\Omega}\left|\ue-g-\boldsymbol{\mathsf{B}}[h]\right|\xrightarrow{\lambda\to\infty}0.
\end{equation*}
This implies \eqref{m-ax} and the proof of Theorem~\ref{m-thm}(I) is thus completed.

By Proposition~\ref{prop-t}(ii), for $\lambda>\lambda_0^*$, $\boldsymbol{\mathsf{T}}_{\lambda}$ has a  fixed point $\mu^*(\lambda)$ (which may not be unique). Hence, $\ue=v_{\lambda,\mu^*(\lambda)}$ is a solution of \eqref{equ}--\eqref{bdu}. This proves Theorem~\ref{m-thm}(II). Therefore, we  complete the proof of Theorem~\ref{m-thm}.
\end{proof}
The proof of Corollary~\ref{cor1} is stated as follows.
\begin{proof}[Proof of Corollary~\ref{cor1}] 
We state the proof of (i) as follows. Note first that $\mathfrak{B}(s_1,...,s_m)=\sum_{j=1}^m\beta_js_j$ is globally Lipschitz continuous. Thus, by Theorem~\ref{m-thm}(I), there exists a constant $\lambda_0>0$ such that the solution~$\ue$ of \eqref{equ} with the boundary condition~\eqref{mbm} is unique as $\lambda>\lambda_0$. It suffices to consider the case where
\begin{equation*}
g\geq0\,\,\text{on} \,\, \partial\Omega,\,\,\text{and}\,\,\sum_{j=1}^m\beta_jh(\xi_j)>\max_{\overline{\Omega}}h.  
\end{equation*}
Setting   
\begin{equation*}
\sigma=\frac12\left(\sum_{j=1}^m\beta_jh(\xi_j)-\max_{\overline{\Omega}}h\right)>0,
\end{equation*}
we shall prove $\max_{\overline{\Omega}}\ue=\max_{\partial\Omega}\ue$ provided that $\lambda$ is larger than a positive constant depending mainly on $\sigma$. Suppose on the contrary that there exists a sequence $\{\lambda_k\}_{k\in\mathbb{N}}$ with $\lambda_0<\lambda_k\xrightarrow{k\to\infty}\infty$, and for each $\lambda_k$, there exists 
 an interior point $x_k\in\Omega$ such that $u_{\lambda_k}(x_k)=\max_{\overline{\Omega}}u_{\lambda_k}$. Then by \eqref{equ} one obtains 
 \begin{equation*}
  \lambda_kf(x_k,u_{\lambda_k}(x_k))=\nabla\cdot(D(x)\nabla\ue(x))|_{x=x_k}\leq0.   
 \end{equation*}
 Hence, we have $f(x_k,u_{\lambda_k}(x_k))\leq0$. Along with (f2) one immediately arrives at 
 \begin{equation*}
\max_{\overline{\Omega}}u_{\lambda_k}=u_{\lambda_k}(x_k)\leq{h(x_k)}<\sum_{j=1}^m\beta_jh(\xi_j)-\sigma.  
\end{equation*}

On the other hand, for each $x\in\partial\Omega$, we have
\begin{equation*}
\lim_{k\to\infty}u_{\lambda_k}(x)=g(x)+\sum_{j=1}^m\beta_jh(\xi_j)\geq\sum_{j=1}^m\beta_jh(\xi_j).    
\end{equation*}
 (Note that $\boldsymbol{\mathsf{B}}[h]=\sum_{j=1}^m\beta_jh(\xi_j)$ here.) Hence, as $\lambda_k\gg1$, we arrive at
 \begin{equation*}  \max_{\partial\Omega}u_{\lambda_k}>\sum_{j=1}^m\beta_jh(\xi_j)-\sigma>\max_{\overline{\Omega}}u_{\lambda_k}
 \end{equation*}
  which leads to a contradiction. Therefore, we obtain that there exists a $\bar{\lambda}>0$ depending on $\sigma$ such that as $\lambda>\bar{\lambda}$, there holds
$\max_{\overline{\Omega}}\ue=\max_{\partial\Omega}\ue$. 

Following the same argument to the case where $g\leq0$ on $\partial\Omega$, and $\sum_{j=1}^m\beta_jh(\xi_j)<\min_{\overline{\Omega}}h$, we can obtain $\min_{\overline{\Omega}}\ue=\min_{\partial\Omega}\ue$,  and thus prove (i).

For (ii), i.e., under the boundary condition~\eqref{bdi}, we know that equation~\eqref{equ} with sufficiently large $\lambda$ has a unique solution $\ue$ satisfying 
\begin{equation*}
\lim_{\lambda\to\infty}\ue(x)=g(x)+\int_{\Omega}w(y)h(y)\text{d}y\,\,\,\text{uniformly\,on\,the\,boundary}\,\,\partial\Omega.    
\end{equation*}
 Hence, by the same argument as (i), we can prove (ii). This completes the proof of Corollary~\ref{cor1}.
\end{proof}

\subsection*{Acknowledgement}
This research was completed during an enriching visit to Professor Chun Liu at the Illinois Institute of Technology (IIT) in the Fall of 2023. The author is deeply grateful to Professor Liu and IIT for providing a conducive and supportive environment that significantly facilitated this work. Sincere gratitude is also extended to the referee for providing valuable feedback that substantially improved the final manuscript. Partial support for this research was provided by the Taiwan Ministry of Science and Technology under grant number 112-2115-M-007-008-MY2.

\end{document}